\theoremstyle{plain}
    \newtheorem{Thm}{\bf Theorem}[section]
    \newtheorem{Prop}[Thm]{\bf Proposition}
    \newtheorem{Cor}[Thm]{\bf Corollary}
    \newtheorem{Q}[Thm]{\bf Question}
\theoremstyle{remark}
    \newtheorem{Rem}[Thm]{\bf Remark}
\theoremstyle{definition}
    \newtheorem{Def}[Thm]{\bf Definition}
\numberwithin{equation}{section}
\newcommand{\mlabel}[1]%
  {\mbox{}\marginpar{\raggedleft\hspace{0pt}{\rm\ttfamily#1}}\label{#1}}
\newcommand{\into}{\operatorname{\hookrightarrow}}
\newcommand{\Fdepth}{\operatorname{F-depth}}
\newcommand{\Min}{\operatorname{Min}}
\newcommand{\fm}{{\mathfrak m}}
\newcommand{\fn}{{\mathfrak n}}
\newcommand{\fA}{{\mathcal A}}
\newcommand{\fB}{{\mathcal B}}
\newcommand{\fC}{{\mathcal C}}
\newcommand{\ringR}{\text{$(R,\fm,k)$ }}
\newcommand{\ringS}{\text{$(S,\fn,k)$}}
\newcommand{\Max}{{\rm Max}}
 \newcommand{\height}{{\rm ht}}
\newcommand{\coheight}{{\rm coht}}
\newcommand{\bx}{{\bf x}}
 \newcommand{\ann}{{\rm Ann}}
 \newcommand{\Fann}{\operatorname{F-ann}}
\newcommand{\Dim}{{\rm dim}}
\newcommand{\Soc}{{\rm Soc}}
\newcommand{\Spec}{{\rm Spec}}
\newcommand{\hr}{{H_{\fm}^{d}(R)}}
\newcommand{\brq}{^{[q]}}
\newcommand{\inc}{\subseteq}
\newcounter{hours}\newcounter{minutes}
\newcommand{\excise}[1]{}
\begin{document}

\title{\bf Local cohomology and F-stability}
\dedicatory{Dedicated to Paul C.~Roberts}

\author[F.~Enescu]{Florian Enescu}

\address{Department of Mathematics and Statistics, Georgia State University, Atlanta, GA 30303}
\email{fenescu@gsu.edu}
\thanks{2000 {\em Mathematics Subject Classification\/}: 13A35}
\thanks{The author was partially supported by the NSA Young Investigator Grant H98230-07-1-0034.}
\date{}

%\begin{document}
\maketitle

\begin{abstract}
We study the relationship between the Frobenius stability of an Artinian module over an F-injective ring
and its stable part.
\end{abstract}

\section{Introduction}

Let $\ringR$ be a local Noetherian ring of positive characteristic
$p$, where $p$ is prime. Let $F : R \to R$ be the Frobenius
homomorphism on $R$, that $F(r) =r^p$, for all $r \in R$. This
homomorphism induces a natural Frobenius action on the local
cohomology modules $H^i_{\fm}(R)$, $i =0, \ldots, d= \dim(R)$. This action
is an effective tool in the study of local cohomology modules
as it was shown over the years by many authors. Our paper deals with
the concept of Frobenius stability which has its roots in an
influential paper by Hartshorne and Speiser~\cite{Ha} via the stable
part of a module $M$ endowed with a Frobenius action. Since then,
papers by Lyubeznik~\cite{L, L2}, Fedder and Watanabe~\cite{FW},
Enescu~\cite{E}, Singh and Walther~\cite{SW} and Sharp~\cite{Sh}
have explored various properties of local cohomology where Frobenius
stability played a role in some fashion. The work of Hartshorne and
Speiser, Lyubeznik, Singh and Walther dealt with the concept of
Frobenius depth and applications to the Grothendieck vanishing
problem (see~\cite{L2}, page 1), while Fedder and Watanabe, Enescu
and Sharp have explored connections to tight closure theory from a
different perspective.

Our main goal is to present a coherent description of Frobenius
stability and establish a clear relationship between the F-stability
in the sense of Fedder-Watanabe and the stable part of the local
cohomology as in Hartshorne-Speiser and Lyubeznik. With this goal
in mind, we will carefully discuss various aspects of Frobenius
stability and their relevance to the aforementioned papers hoping to
make our exposition valuable to the reader interested
in a unitary presentation of these aspects.

Before stating our main contributions, we need to introduce a few
notations and related concepts which are used throughout our
paper.  Everywhere in this note $\ringR$ will denote a local Noetherian
ring of positive characteristic $p$, where $p$ is prime, and Krull
dimension $d$. A Frobenius action on an Artinian $R$-module $M$ is
an additive map $F_M: M \to M$ such that $F_M(rm) = r^p F_M(m)$. We
will often drop the subscript "$-_M$" from our notation when there
is no danger of confusion. The main example of Artinian $R$-modules
that we will consider is that of local cohomology modules of $R$
with support in the maximal ideal $\fm$. Let $\underline{x} = x_1,
\ldots, x_d$ be a system of parameters for $R$. The $i$th local
cohomology module of $R$ equal the $i$th cohomology module of the
C\v{e}ch complex

$$ 0 \to R \to \oplus_{i=1}^d R_{x_i} \to
 \cdots \to \oplus_{i=1}^d
R_{\bx _{\hat{i}}} \to R_{\bx} \to 0,$$

where $\bx _{\hat{i}} = x_1 \cdots x_{i-1} x_{i+1}\cdots x_d$, and
$\bx= x_1 \cdots x_d$.

The Frobenius $F$ acts on $R$ and its localizations and hence
induces an action on the cohomology modules of the C\v{e}ch complex. We
will denote this induced Frobenius action on $H^i_{\fm}(R)$ by $F$.

The particular case of $\hr$ is very important, as this is the only
nonzero local cohomology module of $R$ when $R$ is Cohen-Macaulay.
An element of $\eta \in \hr$ is denoted by $\eta = [ \frac{z}{\bx
^s}]$ and the Frobenius action $F$ sends $\eta$ to $F(\eta) = [
\frac{z^p}{\bx ^{ps}}]$.

The following alternate description of $\hr$ will be useful later in
the paper. Let $x_1, \ldots, x_d$ be a system of parameters in $R$.
The local cohomology module $\hr$ can be obtained as a direct limit
of $R/(x_1^t,\ldots, x_d^t)$ where the maps of the direct system are
given by

$$R/(x_1^t,\ldots, x_d^t) \buildrel\bx^{l-t}\over\to R/(x_1^l,\ldots, x_d^l),$$
where $l \geq t$ and $\bx = x_1 \cdots x_d$. With this in mind,
an element $\eta \in \hr$ will be described by $\eta = [ z + (x_1^t,
\ldots, x_d^t)]$, where $z \in R$. The Frobenius action $F$ sends
$\eta$ to $F(\eta) = [z^p +(x_1^{pt}, \ldots, x^{pt}_d)]$.

\begin{Def}
{\rm Let $\ringR$ be a local Noetherian ring of positive
characteristic $p$, p prime. Then $R$ is} F-injective {\rm if $F$
acts injectively on $H^i_{\fm}(R)$ for all $i$.}
\end{Def}

In 1989 Fedder and Watanabe defined the notion of F-stability
for local cohomology modules of $R$ with support in the maximal
ideal $\fm$ of $R$ and studied it in the case of F-injective rings, ~\cite{FW}.
This definition can be extended to an Artinian $R$-module $M$
endowed with a Frobenius action. For such modules, Hartshorne and
Speiser have defined in 1977~\cite{Ha}, in the case when $R$ contains a
coefficient field $k$, a natural $k$-vector space $M_s \subseteq M$
called the {\it stable part} of $M$. See Section 2 for precise
definitions.  Our main contribution in this paper is to establish a
clear relationship between the F-stability of an Artinian module $M$
and its stable part.

More precisely, we prove the following:

\begin{Thm}
\label{intromain}
 Let $\ringR$ be a Noetherian local ring containing a
coefficient field $k$. Let $M$ be an Artinian $R$-module which
admits an injective Frobenius action.

Then $M$ is $F$-stable if and only if $M_s \neq 0$.

\end{Thm}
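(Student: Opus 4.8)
The plan is to work throughout with the descriptions recalled in Section~2. Since $F$ acts injectively on $M$, the $F$-nilpotent part of $M$ vanishes, so the stable part is simply the descending intersection $M_s=\bigcap_{e\ge 0}F^{e}(M)$; each $F^{e}(M)$ is an $R^{p^{e}}$-submodule, and $M_s$ is a $k$-subspace (here one uses that $k$ is a perfect coefficient field, so that $p^{e}$-th roots of elements of $k$ lie in $k$). Moreover $F$ restricts to a \emph{bijection} $\sigma\colon M_s\to M_s$: one has $F(M_s)\subseteq M_s$ trivially, and if $\eta\in M_s$ one writes $\eta=F^{e+1}(m_{e})$ and, using injectivity of $F$, sees that the unique $F$-preimage of $\eta$ lies in $F^{e}(M)$ for every $e$, hence in $M_s$. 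I also use that $M$ being $F$-stable amounts (Section~2) to the existence of a nonzero $R$-submodule $N\subseteq M$ with $F(N)=N$; matching this with the Fedder--Watanabe definition is a preliminary step.

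The implication ``$F$-stable $\Rightarrow M_s\neq 0$'' is immediate: if $N\neq 0$ is an $R$-submodule with $F(N)=N$, then $N=F^{e}(N)\subseteq F^{e}(M)$ for every $e$, so $0\neq N\subseteq\bigcap_{e}F^{e}(M)=M_s$.

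For the converse I would produce an explicit witness: set $W:=\operatorname{Soc}(M)\cap M_s$, where $\operatorname{Soc}(M)=(0:_{M}\mathfrak m)$. Since $\mathfrak m$ annihilates $\operatorname{Soc}(M)$, every $k$-subspace of it is an $R$-submodule, so $W$ is an $R$-submodule of $M$; it remains to show $W\neq 0$ and $F(W)=W$. For $W\neq 0$: fix $0\neq\eta\in M_s$ and put $\eta_{j}:=\sigma^{-j}(\eta)\in M_s\setminus\{0\}$, so that $\eta_{j-1}=F(\eta_{j})$. Because $F$ is injective and $k$ is perfect, $F$ carries a $k$-basis of $R\eta_{j}$ to a $k$-basis of its image $F(R\eta_{j})=R^{p}F(\eta_{j})$, whence $\dim_{k}R^{p}F(\eta_{j})=\dim_{k}R\eta_{j}$; therefore $\dim_{k}R\eta_{j-1}=\dim_{k}RF(\eta_{j})\ge\dim_{k}R^{p}F(\eta_{j})=\dim_{k}R\eta_{j}$, so the positive integers $\dim_{k}R\eta_{j}$ are non-increasing and hence eventually constant. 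In that constant range $\dim_{k}RF(\eta_{j})=\dim_{k}R^{p}F(\eta_{j})$ forces $RF(\eta_{j})=R^{p}F(\eta_{j})$, i.e. $R=R^{p}+\operatorname{ann}_{R}(F(\eta_{j}))$; thus the Frobenius endomorphism of the Artinian local ring $R/\operatorname{ann}_{R}(F(\eta_{j}))$ is surjective, hence bijective, so this ring is reduced and therefore equals $k$, giving $\operatorname{ann}_{R}(F(\eta_{j}))=\mathfrak m$ and $\eta_{j-1}=F(\eta_{j})\in\operatorname{Soc}(M)$. Hence $\eta_{j}\in\operatorname{Soc}(M)\cap M_s=W$ for $j\gg 0$, so $W\neq 0$. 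For $F(W)=W$: if $m\in M_s$ and $F(m)\in W\subseteq\operatorname{Soc}(M)$, then for $r\in\mathfrak m$ we have $r^{p}\in\mathfrak m$, so $F(rm)=r^{p}F(m)=0$, and injectivity of $F$ gives $rm=0$; thus $\mathfrak m m=0$ and $m\in W$. So $\sigma^{-1}(W)\subseteq W$; since $\sigma$ is a $p$-linear bijection of $M_s$ and $k$ is perfect, $\dim_{k}\sigma^{-1}(W)=\dim_{k}W<\infty$, forcing $\sigma^{-1}(W)=W$ and hence $F(W)=\sigma(W)=W$. Thus $W$ is a nonzero $F$-stable $R$-submodule and $M$ is $F$-stable.

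The step I expect to be the main obstacle is the dimension argument showing $\operatorname{Soc}(M)\cap M_s\neq 0$ — equivalently, that the inverse-Frobenius orbit $\{\sigma^{-j}(\eta)\}$ of any $\eta\in M_s$ eventually enters the socle. This is precisely where injectivity of $F$, the Artinian hypothesis (to make the lengths $\dim_{k}R\eta_{j}$ stabilize), and the perfect coefficient field (to pass from ``$F$ is a $p$-linear injection'' to ``$F$ preserves $k$-dimension'', and to recognize $R/\mathfrak a$ with $R=R^{p}+\mathfrak a$ as a field) all intervene together; I would expect the bulk of the write-up to be spent making this clean. A secondary, more bookkeeping obstacle is reconciling the working formulation of $F$-stability used above with the definition given in Section~2.
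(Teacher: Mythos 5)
Your argument is built, from its first paragraph on, on the assumption that the coefficient field $k$ is perfect: you need $k=k^p$ to identify $M_s$ with $\bigcap_e F^e(M)$ rather than with $\bigcap_e\langle F^e(M)\rangle_k$, to invert $F$ on $M_s$ (for non-perfect $k$ one only knows that the $k$-span of $F(M_s)$ equals $M_s$, so $\sigma^{-1}$ need not exist), to run the ``basis goes to basis'' dimension counts, and to conclude $\sigma^{-1}(W)=W$ at the end. The theorem, however, is stated for an arbitrary coefficient field, and the reduction to the perfect case is not free: under the base change $M\mapsto M^K=K\otimes_k M$ to the perfect closure, injectivity of the Frobenius action can be lost --- this is precisely the phenomenon exhibited in Section 4, where $k^{1/p}\otimes_k R$ fails to be reduced for a suitable F-injective $R$. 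That is why the paper's proof of the implication $M_s\neq 0\Rightarrow M$ F-stable passes to $M^K$, factors through $M^K/(M^K)_{nil}$ and Proposition~\ref{hs}, and only then descends to $M$ using $M\subset M^K$ and injectivity of $F$ on $M$ itself. Your proposal does not address this reduction, so as written it proves the theorem only for perfect $k$; that is the genuine gap. A smaller deferred item is the equivalence of your working formulation of F-stability (existence of a nonzero $R$-submodule $N$ with $F(N)=N$) with the Fedder--Watanabe definition: the direction you need for the converse is easy and valid for any $k$, but the direction you need for the forward implication again uses perfectness, and is in any case an unnecessary detour --- the paper gets the forward implication directly from Remark~\ref{charac} by intersecting $\Soc(M)$ with the descending chain of subspaces $\langle F^e(M)\rangle_k$, with no hypothesis on $k$.

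That said, for perfect $k$ your proof of the hard direction is correct and genuinely different from the paper's. Where the paper invokes Hartshorne--Speiser (Proposition~\ref{hs}(1)) as a black box to get $M_s\subseteq\Soc_R(M)$, you prove directly that the backward Frobenius orbit $\sigma^{-j}(\eta)$ of any $0\neq\eta\in M_s$ eventually enters the socle, by showing that the lengths $\dim_k R\eta_j$ are non-increasing and that their stabilization forces $R=R^p+\ann_R(\eta_{j-1})$, whence $R/\ann_R(\eta_{j-1})$ is a reduced Artinian local ring and $\ann_R(\eta_{j-1})=\fm$. This is a clean, self-contained replacement for the Hartshorne--Speiser input (it yields slightly less, namely $M_s\cap\Soc_R(M)\neq 0$ rather than $M_s\subseteq\Soc_R(M)$, but that is all the equivalence requires). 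If you either supply the base-change step --- including the passage through $(M^K)_{nil}$ --- or restrict the statement to perfect coefficient fields, the argument goes through.
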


This result allows us to establish a connection between the set of
prime ideals $P$ in $R$ for which $R_P$ is F-stable and a set of
primes discovered by Lyubeznik in his work on $F$-modules (see Definition 2.1 for the definition
of F-stability for a local ring). These
primes are naturally related to the notion of F-depth and we
explore this relationship. This is done in Section 3. Section 4
presents a counterexample to a natural question on the behavior of
F-injectivity under flat local maps with regular fibers and shows
that complete F-injective $1$-dimensional domains with algebraically closed
residue field are regular.

We would like to review some of the basic definitions and facts from tight
closure theory that will be needed in our paper.

We use $q$ to denote a power of $p$, so $q = p^e$ for $e \ge 0$.
For $I \inc R$ set $I\brq = (i^q: i\in I)$.  Let $R^{\circ}$ be the complement in $R$
of the minimal primes of $R$.  We say that x belongs to the {\it tight closure} of
$I$ and write $x \in I^*$ if there exists $c \in R^{\circ}$ such that for all $q \gg 0$, $c x^q \in I\brq$.
We say that $x$ is in the {\rm Frobenius closure} of $I$, $I^F$, if there
exists a $q$ such that $x^q \in I\brq$, and say that $I$ is {\it Frobenius
closed} if $I = I^F$. When $R$ is reduced then $R^{1/q}$ denotes the ring of $q$th
roots of elements of $R$.  When $R^{1/q}$ is module-finite over $R$, the ring $R$ is called
{\it F-finite}.  We call $R$ {\rm weakly F-regular} if every ideal of $R$
is tightly closed.  A weakly F-regular ring is always normal, and under mild
hypotheses is Cohen-Macaulay.  A ring $R$ is {\it F-regular} if every localization
of $R$ is weakly F-regular.

We call an ideal $I = (x_1, \ldots x_n)$ a {\it parameter ideal} if $\height (I) \ge n$. The ring
$R$ is {\it F-rational} if every parameter ideal is tightly closed.
We note that F-rational Gorenstein rings are F-regular.

A ring $R$ for which $F: R \to R$ is a pure homomorphism is called {\it F-pure}. An F-pure ring is
F-injective and moreover an excellent and reduced ring $R$ is F-pure if and only if $I^F =I$ for all ideals
$I$ in $R$.

When $R$ is local Cohen-Macaulay, then $R$ is F-injective
if and only if some (equivalently, every) ideal generated by a
system of parameters is Frobenius closed. Moreover, if $R$ is Cohen-Macaulay F-injective, then
$R_P$ is Cohen-Macaulay F-injective for any prime ideal $P$ in $R$. This fact is well known to the experts. A proof of it follows from the more general fact that the Frobenius closure 
of ideals generated by regular sequences commutes with localization. This result has a proof identical to 
that of Theorem 4.5 in~\cite{HH94} (take $c=1$) where it is shown that the tight closure of ideals generated by 
regular sequences commutes with localization. In fact, a more general theorem states that the F-injectivity property localizes for 
an F-finite local ring that admits a dualizing complex, see~\cite{Sc}, Proposition 4.3.

\section{Frobenius stability of Artinian modules}

Let $\ringR$ be a Noetherian local ring of characteristic $p$, where $p$ is a prime number, and dimension $d$. Let $M$
be an Artinian $R$-module.

Assume that $M$ admits a Frobenius action $F=F_M : M \to M$, i.e. an
additive map with the property that $F(rm) = r^pF(m)$ for all $m \in
M$ and $ r \in R$.

Let $\Soc_R(M)= \{ x \in M: \fm \cdot x =0 \}$. This is a
$R$-submodule of $M$ which is naturally a vector space over $R/\fm
=k$. In fact, $\Soc_R(M)$ is finite dimensional over $k$.

Our main examples of Artinian $R$-modules that admit a Frobenius action are  the local cohomology modules of $R$, $H^i_{\fm}(R)$ with
$0 \leq i \leq d$. However we find it helpful to present the notions related to Frobenius stability in the more general context of Artinian modules and then to apply them to local cohomology.

The following definition is inspired by Fedder and Watanabe who considered it only in the case of local cohomology modules.

\begin{Def}[Fedder-Watanabe]
Let $M$ be an Artinian $R$-module that admits a Frobenius action Let
$S=\Soc_R(M)$ be the socle of $M$. Denote $F^e(S) = \{F^e (m) : m
\in S \}.$

We say that $M$ is {\it $F$-unstable} if there exists $N >0$ such
that $S \cap F^e(S) = 0$ for all $e \geq N$. Note that the zero
module is F-unstable. If $M$ is not F-unstable, then it will be
called {\it $F$-stable.}

In general, we say that $R$ is {\it $F$-unstable} if $H^i_{\fm}(R)$
is {\it $F$-unstable} for every $i$.
\end{Def}

The reader should be aware that a submodule $N$ of $M$ is sometimes
called F-stable if $F(N) \subseteq N$. We will call such submodules
$N \subseteq M$ {\it F-invariant} to avoid any possible confusion.

The following reformulation can be established in the case of an
injective Frobenius action on $M$. We decided to include its proof
for the convenience of the reader.

\begin{Prop}(Fedder-Watanabe)
\label{fw}
Let $\ringR$ be a local ring. Let $M$ be an Artinian $R$-module that admits an injective Frobenius action $F: M \to M$.
Let $S=\Soc(M)$ be the socle of $M$.

If $S \cap F^e(S) \neq 0$
holds for infinitely many $e >0$ (i.e. $M$ is F-stable), then there exists $0 \neq \eta \in S$ such that $F^e( \eta ) \in S$ for every $e \geq 0$.
\end{Prop}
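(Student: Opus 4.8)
The plan is to locate $\eta$ inside the submodules of elements whose $F$-orbit has already landed in the socle. Write $S=\Soc_R(M)$. The one genuinely new input — and essentially the only place the injectivity of $F$ is used — is the following elementary observation: \emph{if $w\in M$ and $F(w)\in S$, then $w\in S$.} Indeed, $F(w)\in S$ means $\fm F(w)=0$; for any $r\in\fm$ we have $r^{p}\in\fm$, so $0=r^{p}F(w)=F(rw)$, and injectivity of $F$ forces $rw=0$; hence $\fm w=0$, i.e.\ $w\in S$. Iterating, for any $w\in M$ and any $e\ge 1$, $F^{e}(w)\in S$ already forces $F^{j}(w)\in S$ for all $0\le j\le e$.

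With this in hand I would set $S_{e}:=\{\,w\in M: F^{e}(w)\in S\,\}$ for $e\ge 0$, so that $S_{0}=S$. Each $S_{e}$ is an $R$-submodule of $M$ (if $F^{e}(w)\in S$ and $c\in R$ then $F^{e}(cw)=c^{p^{e}}F^{e}(w)\in S$, using $p^{e}$-semilinearity of $F^{e}$ and the fact that $S$ is a submodule), and by the observation above $S_{e}\subseteq S$ and $S_{e+1}\subseteq S_{e}$ for every $e$. Since $M$ is Artinian, this descending chain of submodules stabilizes: there is an $N$ with $S_{e}=S_{N}=:S_{\infty}$ for all $e\ge N$. (Alternatively, all the $S_{e}$ are $k$-subspaces of the finite-dimensional $k$-space $S$, so the chain stabilizes for that reason.)

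It then suffices to prove $S_{\infty}\ne 0$, and here the hypothesis enters. By assumption $S\cap F^{e}(S)\ne 0$ for infinitely many $e$, hence for some $e\ge N$; pick $0\ne y\in S\cap F^{e}(S)$ and write $y=F^{e}(m)$ with $m\in M$. Since $F^{e}$ is injective and $y\ne 0$ we get $m\ne 0$, and $F^{e}(m)=y\in S$ says exactly $m\in S_{e}=S_{\infty}$; so $S_{\infty}\ne 0$. Finally, any nonzero $\eta\in S_{\infty}$ works: for every $j\ge 0$ one has $\eta\in S_{\infty}\subseteq S_{j}$ — for $j\le N$ because the chain is decreasing and $S_{\infty}=S_{N}$, for $j\ge N$ because $S_{j}=S_{\infty}$ — hence $F^{j}(\eta)\in S$; in particular $\eta\in S_{0}=S$.

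I expect the only step with real content to be the observation $F^{-1}(S)\subseteq S$; once it is isolated, the rest is a routine descending-chain argument on the Artinian module $M$, injectivity re-entering only in the harmless implication $y\ne 0\Rightarrow m\ne 0$. The one point I would be careful to spell out is that the $S_{e}$ are genuinely $R$-submodules, so that the Artinian hypothesis (or, failing that, the finite $k$-dimension of $S$) can be invoked.
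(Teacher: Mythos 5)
Your proof is correct, and it takes a genuinely different (and cleaner) route than the paper's. The paper works \emph{forward} with the images $M_e = S \cap F^e(S)$: it first shows $M_{e+1} \subseteq F(M_e)$, then must equip each $M_e$ with a twisted $k$-vector space structure ($l * m = l^{p^e} m$) so that $F$ becomes $k$-linear, and runs a dimension-stabilization argument ($\dim_k M_{e+1} \le \dim_k M_e$, eventually constant and nonzero) to produce a $\gamma \in M_N$ with $F^{e-N}(M_N) = M_e$ for $e \ge N$, finally pulling $\gamma$ back to the desired $\eta$. You instead work \emph{backward} with the preimages $S_e = \{w \in M : F^e(w) \in S\}$. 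Your key observation --- $F(w) \in S$ forces $w \in S$ when $F$ is injective --- is the same computation that underlies the paper's Claim ($0 = r^p F(w) = F(rw)$, hence $rw = 0$), but packaging it as ``$F^{-1}(S) \subseteq S$'' turns the $S_e$ into an honest descending chain of $R$-submodules of $M$, so stabilization is immediate from the Artinian hypothesis (or from $\dim_k S < \infty$, since each $S_e$ is killed by $\fm$), with no twisted scalar structures and no dimension bookkeeping. The hypothesis then enters only once, to show the stable value $S_\infty$ is nonzero, and any nonzero element of $S_\infty$ is the required $\eta$. What the paper's approach buys in exchange for the extra work is slightly more information --- that $F$ maps $M_N$ isomorphically onto $M_e$ for $e \ge N$, i.e., the eventual images form a single orbit of $k$-spaces of constant dimension --- but that is not needed for the statement as given. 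Your argument is complete; the only points worth spelling out carefully (and you do) are that the $S_e$ are genuine $R$-submodules and that $F^e$ injective plus $y \ne 0$ gives $m \ne 0$.
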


\begin{proof}

Assume that $S \cap F^e(S) =0$ for infinitely many $e$. Denote $M_e = S \cap F^e(S)$.

Claim: $M_{e+1} \subset F(M_e)$.

Indeed, take $ m \in M_{e+1}$, that is $m \in S$ and $m = F^{e+1} (a)$, for some $a \in S$.
So, $0 = \fm ^{[p^j]} m = F^j( \fm F^{e+1-j}(a))$ and therefore $F^{e+1-j}(a) \in S$, for all $1 \leq j \leq e+1$.

So $ F^e(a) \in S$ and obviously $F^e(a) \in F^e(S)$, since $a \in S$, proving the claim.

One can consider $M_e$ as a vector space over $R/\fm = k$ with the multiplication $l * m = l^{[p^e]} m$, for $l \in k$ and
$m \in M_e$.

Moreover, $F : M_e \to F(M_e)$ is in fact an injective $k$-linear map, and hence bijective. So
$\dim_k (M_e) = \dim_k (F(M_e)) $ and hence $\dim _k (M_{e+1}) \leq \dim_k (M_e)$ with equality if and only if
$F^e(M_e) = M_{e+1}$.

But from the finite dimensionality of $S$, we get that $\dim (M_e)$
is constant for $e$ large enough. Since $M_e \neq 0$ for infinitely
many $e$, we know that this constant must be non-zero. So, there
exist $N >0$ and nonzero $\gamma \in M_N$, where $\dim M_e = \dim M_N$
for all $e \geq N$ and $F^{l}(M_N) = M_{N+l}$ for all $l \geq 0$. We
have that, for all $e \geq N$,  $F^{e-N}$ is an isomorphism from
$M_N$ to $M_e$. Moreover, there exists $\eta \in S$ such that
$F^N(\eta) = \gamma$.

Let us check that $F^e(\eta) \in S$ for all $e$: if $e \leq N$, then let us first notice that $\fm \cdot \gamma = 0$.
so, $\fm \cdot F^N(\gamma) =0$. This implies that $F^{N-e}(\fm \cdot F^e (\gamma))=0$ and using the $F$-injective of $R$ we get our statement.
For $e \geq N$, $F^e(\eta) = F^{e-N}(\gamma) \in M_e \subset S$.  So, $F^e(\eta) \in S$, for all $e$.
\end{proof}

Let $m \in M$ and set $C_m = \langle F^e(m) : e \geq 0 \rangle _R$ the
$R$-submodule of $M$ generated by all $F^e(m)$ with $m \in M$.
Denote $\Fann(m) : = {\rm Ann}_R(C_m) = \{ r \in R : rF^e(m) =0, \
{\rm for \ all } \ e \geq 0 \}$.

A number of papers have considered the annihilators of F-invariant submodules from various points of view,~\cite{E, EH, Sh}. An important result, independently obtained by
Sharp on one hand and Enescu and Hochster on the other, explains the main properties of these ideals in the case of an injective
Frobenius action on $M$. We will quote this result in Theorem~\ref{seh} below.

A family $\Gamma$ of radical ideals is {\it closed under primary
decomposition} if for any ideal $I \in \Gamma$ and any irredundant
intersection $ I = P_1 \cap \cdots \cap P_n$, where $P_1,...,P_n$
are prime ideals, it follows that $P_i \in \Gamma$, for all
$i=1,...,n$.

\begin{Thm}[Sharp; Enescu-Hochster]
\label{seh} Let $\ringR$ be a Noetherian local ring of
characteristic $p$ and let $M$ be an Artinian $R$-module that admits
an injective Frobenius action denoted $F$. Denote $\Gamma = \{
\ann_R(N) : N \subseteq M \  \ such \ that \ F(N) \subseteq N \}$.

Then $\Gamma$ is a finite set of ideals, consists of radical ideals and is closed under primary decomposition.
\end{Thm}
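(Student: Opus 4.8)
The plan is to reduce all three assertions to a single finiteness statement, dispatch the radical property and closure under primary decomposition by elementary arguments, and reserve the real work for finiteness, which I expect to be the main obstacle.

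\emph{Reductions.} Since $M$ is Artinian over $R$ it is Artinian over the completion $\widehat R$, with the same $R$-submodules (hence the same $F$-invariant submodules), and $F$ extends. Because $\ann_R(N)=\ann_{\widehat R}(N)\cap R$, contraction carries radical ideals to radical ideals and finite sets to finite sets, and every minimal prime of the contraction of a radical ideal $Q_1\cap\dots\cap Q_m$ of $\widehat R$ lies among the primes $Q_i\cap R$, it suffices to treat the complete case. Next, for $F$-invariant $N$ one has $C_m\subseteq N$ whenever $m\in N$, so $\ann_R(N)=\bigcap_{m\in N}\ann_R(m)=\bigcap_{m\in N}\Fann(m)$; conversely each $C_m$ is $F$-invariant, so $\Fann(m)=\ann_R(C_m)\in\Gamma$, and more generally $\bigcap_\alpha\Fann(m_\alpha)=\ann_R(\sum_\alpha C_{m_\alpha})\in\Gamma$. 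Thus $\Gamma$ is precisely the set of intersections of subfamilies of $\{\Fann(m):m\in M\}$; in particular $\Gamma$ consists of radical ideals as soon as each $\Fann(m)$ is radical, and $\Gamma$ is finite as soon as the set $\{\Fann(m):m\in M\}$ is finite.

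\emph{Radical.} Suppose $r^n\in\Fann(m)$. Pick $e$ with $p^e\ge n$; then $r^{p^e}\in\Fann(m)\subseteq\Fann(F^a(m))$ for all $a\ge 0$, since $C_{F^a(m)}\subseteq C_m$. Hence $F^e(rF^a(m))=r^{p^e}F^{e+a}(m)=0$, and because $F$, hence $F^e$, is injective we conclude $rF^a(m)=0$ for every $a\ge 0$, i.e. $r\in\Fann(m)$. So every $\Fann(m)$, and every intersection of such ideals, is radical, which gives the second assertion.

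\emph{Closure under primary decomposition.} Let $I=\ann_R(N)\in\Gamma$ with $N$ $F$-invariant, and let $I=P_1\cap\dots\cap P_n$ be an irredundant decomposition into primes (it exists, $I$ being radical by the previous step). Fix $j$; if $n=1$ then $P_1=I\in\Gamma$, so assume $n\ge 2$ and set $\fq=\bigcap_{i\ne j}P_i$, so $\fq\not\subseteq P_j$ by irredundancy. Choose $t\in\fq\setminus P_j$. Then $tN$ is $F$-invariant, since $F(tn)=t^pF(n)\in t^pN\subseteq tN$, and $\ann_R(tN)=(\ann_R(N):t)=(P_j:t)\cap(\fq:t)=P_j$, using $(P_j:t)=P_j$ (as $t\notin P_j$ and $P_j$ is prime) and $(\fq:t)=R$ (as $t\in\fq$). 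Hence $P_j\in\Gamma$, so $\Gamma$ is closed under primary decomposition.

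\emph{Finiteness --- the main obstacle.} It remains to show $\{\Fann(m):m\in M\}$ (equivalently $\Gamma$) is finite; this is the technical heart and the one point where the Artinian hypothesis and the injectivity of $F$ are genuinely used together. Over the complete ring $R$, Matlis duality turns $M$ into a finitely generated $R$-module $D=M^\vee$, and the Frobenius action on $M$ dualizes to an $R$-linear Cartier-type operator $\kappa\colon F_*D\to D$, with injectivity of $F$ on $M$ dualizing to surjectivity of $\kappa$; under this correspondence the $F$-invariant submodules $N\subseteq M$ match the $\kappa$-stable quotients $D\twoheadrightarrow D/N^{\perp}$, and $\ann_R(N)=\ann_R(D/N^{\perp})$. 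So one must prove: a finitely generated module carrying a surjective Cartier operator has only finitely many ideals of the form $\ann_R(D/D')$ as $D'$ ranges over $\kappa$-stable submodules. This is carried out, in the circle of ideas of Hartshorne--Speiser and Lyubeznik, by Sharp and by Enescu--Hochster: using Noetherianity of $D$ one shows the relevant annihilators are radical ideals realized inside a finite filtration of $D$ by $\kappa$-subquotients, so that only finitely many primes --- and hence only finitely many intersections of them --- can occur. Granting this, $\{\Fann(m):m\in M\}$ is finite, and together with the radical property and the primary-decomposition step all three conclusions follow.
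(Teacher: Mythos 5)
First, a point of reference: the paper does not prove Theorem~\ref{seh} at all --- it is explicitly \emph{quoted} from Sharp~\cite{Sh} and Enescu--Hochster~\cite{EH}, so there is no in-paper proof to compare against. Judged on its own terms, your proposal correctly and completely establishes two of the three assertions. The reduction of $\Gamma$ to the family of intersections of the ideals $\Fann(m)$ is right (via $C_m\subseteq N$ for $m\in N$ and $\ann_R\bigl(\sum_\alpha C_{m_\alpha}\bigr)=\bigcap_\alpha \Fann(m_\alpha)$), the radical argument (raise to a $p^e$-th power, pull $F^e$ out, use injectivity of $F$) is exactly the standard one and is where the injectivity hypothesis enters, and the primary-decomposition step via the $F$-invariant submodule $tN$ with $t\in\bigcap_{i\ne j}P_i\setminus P_j$ and $\ann_R(tN)=(I:t)=P_j$ is clean and correct. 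The passage to $\widehat R$ is also handled correctly.

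The genuine gap is the finiteness assertion, which you yourself flag as ``the technical heart'' and then do not prove: after dualizing to a finitely generated module $D$ with a surjective Cartier-type operator $\kappa$, you assert that ``only finitely many primes can occur'' because the annihilators are ``realized inside a finite filtration of $D$ by $\kappa$-subquotients,'' and conclude with ``Granting this.'' No such filtration is constructed, and nothing in the radical or primary-decomposition steps produces one; indeed, radicality plus closure under primary decomposition reduces finiteness of $\Gamma$ to finiteness of the set of \emph{prime} ideals in $\Gamma$, but that is precisely the hard content of the Sharp and Enescu--Hochster theorems (proved there by quite different machinery: graded annihilators over the Frobenius skew polynomial ring in~\cite{Sh}, and a Noetherian-induction/antichain argument on the primes in $\Gamma$ in~\cite{EH}). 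As written, the finiteness claim is a citation dressed as a proof step, so the argument is not self-contained exactly where the theorem has its substance.
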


\begin{Rem}
\label{charac}
{\rm  Under the conditions of Theorem~\ref{seh} (or~\ref{fw}), $M$
is F-stable if and only if $\fm \in \Gamma$ if and only if there
exists $0\neq m \in M$ such that $\fm = \Fann(m)$. This follows
immediately from Proposition~\ref{fw}.}
\end{Rem}

Now it is time to explain the relationship between
the F-stability of $M$ and a stability concept introduced by
Hartshorne and Speiser and refined by Lyubeznik.

\begin{Def}[Hartshorne-Speiser]
Assume that $R$ contains a coefficient field $k$. Let $M$ be an
Artinian $R$-module such that the Frobenius $F$ acts on $M$. For all
$j$, let $F^j(M) =\{ F^j(m) : m \in M \}$ and $\langle F^j(M) \rangle _k$ be the
$k$-vector space spanned by $F^j(M)$ in $M$. The {\it stable} part
of $M$ is $M_s: = \cap _{j \geq 1} \langle F^j(M) \rangle_k$.
\end{Def}

While this definition depends on the choice of the coefficient field $k$, Lyubeznik has shown that the dimension of $M_s$ as a $k$-vector
space is independent of $k$ (see~\cite{L}, Corollary 4.11). Moreover, Hartshorne and Speiser proved that, in the case that $k$ is perfect,
$M_s$ is finite dimensional over $k$ and the naturally induced Frobenius action is bijective on $M_s$.

Te be more precise, let $k \subset K$ and let $R^K = K \hat{\otimes}_k R$, the complete tensor product. The ring $R^K$ is complete and local,
with maximal ideal equal to $K \hat{\otimes}_k \fm$ and residue field $K$.

For an $R$-module $M$, we let $M^K = R^K \otimes_R M$. Since $M$ is Artinian over $R$, we have that $M^k= K \otimes_k M $.
If $M$ admits a Frobenius action $F_M : M \to M$, then
this induces a Frobenius action on $M^K$ by $F_{M^K} ( l \otimes m ) = l^p \otimes F_M(m),$ for $l \in K, m\in M$. Moreover,
$\Soc_{R^K}(M^K ) = K \otimes_k \Soc_R(M)$.

Let us denote $M_{nil} = \{ m \in M : {\rm there \ exists} \ e {\rm \
such \ that } \ F^e(m)=0\}$ and $M_{red}= M/M_{nil}$. Obviously, if $F$ acts
injectively on $M$ then $M_{nil}=0$. In general, $M_{nil}$ is invariant
under $F$ and $F$ acts injectively on $M_{red}$.

\begin{Thm}[Hartshorne-Speiser; Lyubeznik]
\label{hsl}
Assume that $R$ contains a coefficient field $k$ and let
$M$ be an Artinian $R$-module with a Frobenius action $F$ on it.
Then $M_s$ is finite dimensional over $k$, $F: M_s \to M_s$ is
injective and the $k$-vector subspace of $M$ spanned by $F(M_s)$
coincides with $M_s$. Moreover, if $K$ is the perfect closure of
$k$, $M^K$ is Artinian over $R^K$, Frobenius acts on it, $(M^K)_s =
K \otimes_k M_s$ and $\dim _k M_s= \dim _K (M^K)_s$.

\end{Thm}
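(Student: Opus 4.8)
The plan is to reduce to the case of a perfect coefficient field by a base-change argument, prove the statement there by a direct stabilization argument on the descending chain $\langle F^j(M)\rangle_k$, and then descend the two assertions about dimensions via the isomorphism $\Soc_{R^K}(M^K)=K\otimes_k\Soc_R(M)$ together with the already-established $k$-linear-algebra facts. First I would observe that $M^K$ is indeed Artinian over $R^K$: since $M$ is Artinian over $R$ it is a finitely generated $\hat R$-module where $\hat R$ is the completion, and $M^K = K\,\hat\otimes_k\hat R\otimes_{\hat R}M$ is finitely generated over the Artinian-on-finite-length-pieces ring $R^K$; more simply, $\Soc_{R^K}(M^K)=K\otimes_k\Soc_R(M)$ is finite dimensional over $K$, and an $R^K$-module whose socle is finite dimensional and which is the union of its finite-length submodules (inherited from $M=\bigcup_e (0:_M\fm^e)$, each piece tensored up) is Artinian. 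The Frobenius action $F_{M^K}(l\otimes m)=l^p\otimes F_M(m)$ is well defined exactly because the defining relations of the complete tensor product are $F$-compatible when $K$ is built from $k$ by adjoining $p$-th roots.

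Next I would prove the three core assertions when $k$ itself is perfect — this is the Hartshorne–Speiser part. Consider the descending chain of $k$-subspaces $V_j:=\langle F^j(M)\rangle_k$ of $M$. The key finiteness input is that each $V_j\subseteq M$ and $M$ has finite length over $\hat R$, but $V_j$ need not be finite dimensional over $k$; so instead I would work inside the socle filtration: intersecting with $(0:_M\fm^{[p^N]})$ for large $N$ and using that $F$ maps $(0:_M\fm^{[p^{N}]})$ into $(0:_M\fm^{[p^{N-1}]})$, one sees that the induced maps on these finite-dimensional-over-$k$ pieces eventually stabilize, so $M_s=\bigcap_j V_j$ is finite dimensional over $k$. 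Because $k$ is perfect, $F$ restricted to the stabilized space is a bijective semilinear map, hence the $k$-span of $F(M_s)$ equals $M_s$ and $F:M_s\to M_s$ is injective. (The semilinearity over a perfect field is what makes "$k$-span of image $=$ image" hold; over a non-perfect field this can fail, which is the whole reason for passing to $K$.)

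Finally I would handle the general $k$ by base change to the perfect closure $K$. Apply the already-proved perfect case to $M^K$ over $R^K$: $(M^K)_s$ is finite dimensional over $K$, $F$ acts bijectively on it, and its $K$-span under $F$ is itself. It then suffices to identify $(M^K)_s$ with $K\otimes_k M_s$. For this, note $\langle F^j(M^K)\rangle_K = K\otimes_k\langle F^j(M)\rangle_k$: the inclusion $\supseteq$ is clear, and $\subseteq$ holds because any element of $F^j(M^K)$ is a $K$-combination of elements $l^{p^j}\otimes F^j(m)$, which lies in $K\otimes_k V_j$. Since $K$ is faithfully flat over $k$, $\bigcap_j(K\otimes_k V_j)=K\otimes_k\bigcap_j V_j=K\otimes_k M_s$, giving $(M^K)_s=K\otimes_k M_s$ and hence $\dim_k M_s=\dim_K(M^K)_s<\infty$. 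Transporting the finiteness, injectivity of $F$, and the span statement from $(M^K)_s$ back through this isomorphism (all of which are detected after the faithfully flat base change $k\to K$, using that $F_{M^K}$ restricts compatibly to $F_M\otimes{\rm id}$ on $K\otimes_k M_s$) completes the proof. The main obstacle I anticipate is the exchange of the infinite intersection with the tensor product: it is legitimate here only because the chain $V_j$ can be replaced, on each finite-length layer of $M$, by an eventually constant chain of finite-dimensional $k$-spaces, so I would take care to make that stabilization explicit before invoking flatness.
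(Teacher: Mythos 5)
The paper itself does not prove this theorem: it is quoted from Hartshorne--Speiser \cite{Ha} (Proposition 1.11) and Lyubeznik \cite{L} (Corollary 4.11), so there is no in-paper argument to compare yours against. Judged on its own, your proposal has a sound outer shell: the identification $\langle F^j(M^K)\rangle_K=K\otimes_k\langle F^j(M)\rangle_k$, the descent of injectivity and of the spanning statement through the faithfully flat extension $k\to K$, and the Artinian-ness of $M^K$ via the socle criterion are all workable. But the core of the theorem --- the perfect-coefficient-field case --- is not actually proved, and everything else reduces to it.

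Two concrete problems. First, your key containment is backwards: if $\fm^{[p^N]}m=0$ then $r^{p^{N+1}}F(m)=F(r^{p^N}m)=0$, so $F$ maps $(0:_M\fm^{[p^N]})$ into $(0:_M\fm^{[p^{N+1}]})$, \emph{not} into $(0:_M\fm^{[p^{N-1}]})$ (for $R=k[[x]]$, $M=R_x/R$, the element $x^{-p^N}$ is killed by $\fm^{[p^N]}$ while $F(x^{-p^N})=x^{-p^{N+1}}$ is not killed by $\fm^{[p^{N-1}]}$). Thus $F$ pushes elements up the socle filtration and induces no self-maps on the finite-dimensional layers, so there is no $F$-invariant finite-dimensional piece to which a Fitting-type stabilization argument can be applied; the sentence ``one sees that the induced maps on these finite-dimensional-over-$k$ pieces eventually stabilize'' is where the entire difficulty of the theorem lives, and it is not justified. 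Second, even granting that the descending chain $V_j=\langle F^j(M)\rangle_k$ stabilizes, its stable value equals $\langle F^{j_1}(M)\rangle_k$ for large $j_1$, which has no a priori reason to be finite-dimensional over $k$; finite-dimensionality of $M_s$ is a separate nontrivial assertion (note also that your appeal to ``$M$ has finite length over $\hat R$'' is false --- Artinian modules such as $H^d_{\fm}(R)$ are not of finite length). These two points are precisely the content of Hartshorne--Speiser's Proposition 1.11, whose proof passes to the completion, uses the descending chain condition on the $R$-submodules $R\cdot F^j(M)$ (not on the $k$-subspaces $V_j$), and extracts the finite-dimensional stable part via Matlis duality (or, in Lyubeznik's treatment, the $F$-module formalism). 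Your base-change reduction is fine, but it reduces the theorem to exactly the statement you have not established.
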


\begin{Prop}[Hartshorne-Speiser]
\label{hs} Let $\ringR$ be a Noetherian local ring containing a
perfect coefficient field $k$.
\begin{enumerate}
\item
Let $M$ be an Artinian $R$-module and let $F=F_M$ be a Frobenius
action on $M$. If $F$ acts injectively on $M$ then $M_s \subseteq
\Soc_R(M)$.

\item
\noindent
Let $$0 \to N\to M \to L \to 0,$$ be a short exact
sequence of $R$-Artinian modules that admit compatible Frobenius
actions, i.e. the following diagram is commutative:

\[
\begin{CD}
0 @>>> N @>>> M @>>> L @>>> 0 \\
@.  @VVF_NV  @VVF_MV      @VVF_LV @. \\
0 @>>> N @>>> M @>>> L @>>> 0
\end{CD}
\]

\medskip

Moreover, assume that $F_L$ acts injectively on $L$.

Then $$ 0 \to N_s \to M_s \to L_s$$ is a short exact sequence of $k$-vector spaces.

\end{enumerate}

\end{Prop}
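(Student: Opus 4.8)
The plan is to rewrite the stable part in a cleaner form and then handle the two parts separately, with perfectness of $k$ doing the essential work. First I would record the basic observation: since $k$ is perfect, for any Artinian module $M'$ carrying a Frobenius action and any $j\geq 1$, the image $F^j(M')$ is already a $k$-subspace of $M'$ — it is closed under addition because $F^j$ is additive, and under multiplication by $c\in k$ because $c\cdot F^j(m)=F^j(c^{1/p^j}m)$ with $c^{1/p^j}\in k$. Hence $\langle F^j(M')\rangle_k=F^j(M')$, so $M'_s=\bigcap_{j\geq 1}F^j(M')$, a decreasing intersection (because $F^{j+1}(M')\subseteq F^j(M')$); in particular $F(M'_s)\subseteq\bigcap_j F^{j+1}(M')=M'_s$, so $M'_s$ is automatically $F$-invariant. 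This description is used throughout.

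For part (1), assume $F$ is injective on $M$. By Theorem~\ref{hsl}, $M_s$ is a finite-dimensional $k$-vector space; choosing a $k$-basis $v_1,\dots,v_n$, set $C:=Rv_1+\cdots+Rv_n$. Then $C$ is a finitely generated, hence finite-length, submodule of the Artinian module $M$; it contains $M_s$ and satisfies $F(C)\subseteq C$ (since $F(M_s)\subseteq M_s\subseteq C$ and $F(rm)=r^pF(m)$); moreover $F|_C$ is injective, and since $C$ is finite-dimensional over the perfect field $k$ this forces $F|_C$ to be bijective (the image of a $k$-basis stays linearly independent because $k$ is perfect). I then claim $\fm C=0$. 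If not, pick $n_0\geq 2$ minimal with $\fm^{n_0}C=0$. For $x_1,\dots,x_{n_0-1}\in\fm$ and $c\in C$ one has $F(x_1\cdots x_{n_0-1}c)=x_1^p\cdots x_{n_0-1}^pF(c)\in\fm^{(n_0-1)p}C$, and $(n_0-1)p\geq n_0$ when $n_0\geq 2$, so this lies in $\fm^{n_0}C=0$; thus $F$ annihilates the nonzero $\fm^{n_0-1}C$, contradicting injectivity of $F|_C$. Hence $\fm C=0$, i.e. $C=\Soc_R(C)$, and $M_s\subseteq C\subseteq\Soc_R(M)$.

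For part (2), let $\iota\colon N\to M$ and $\pi\colon M\to L$ be the two maps; commutativity of the diagram gives $\iota F_N=F_M\iota$ and $\pi F_M=F_L\pi$. Applying $\iota$, $\pi$ to $F^j$ and intersecting over $j$ gives $\iota(N_s)\subseteq M_s$ and $\pi(M_s)\subseteq L_s$, so the sequence of $k$-spaces makes sense, and $N_s\to M_s$ is injective since $\iota$ is; this is exactness at $N_s$. The substantive point is exactness at $M_s$: identifying $N$ with $\iota(N)$, I must show $M_s\cap N=N_s$, where only $M_s\cap N\subseteq N_s$ needs work. Take $v\in M_s\cap N$. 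For each $j$, $v\in F^j(M)$, so by the opening observation $v=F^j(w)$ for some $w\in M$; then $F^j(\pi(w))=\pi(F^j(w))=\pi(v)=0$, and since $F_L$ (hence $F_L^j$) is injective, $\pi(w)=0$, i.e. $w\in N$, so $v=F^j(w)\in F^j(N)$. As $j$ is arbitrary, $v\in\bigcap_j F^j(N)=N_s$, which gives exactness at $M_s$.

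The main obstacle is precisely this last step of part (2): merely knowing $v$ lies in the $k$-span $\langle F^j(M)\rangle_k$ is not visibly enough to push it back into $N$, and the two hypotheses are exactly what rescue it — perfectness of $k$ lets me replace the $k$-combination $\sum c_iF^j(m_i)$ by the single element $F^j(\sum c_i^{1/p^j}m_i)$, and injectivity of $F_L$ then forces that element to lie in $N$. (The displayed sequence is written without a terminal zero; if in addition $M_s\to L_s$ is meant to be surjective, one first notes that injectivity of $F_L$ gives $N\cap F^j(M)=F^j(N)$, hence a short exact sequence $0\to F^j(N)\to F^j(M)\to F^j(L)\to 0$ for every $j$, from which surjectivity can be extracted using the finite-dimensionality of the stable parts from Theorem~\ref{hsl}; I regard exactness at $N_s$ and $M_s$ as the main content.)
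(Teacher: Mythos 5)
The paper does not actually prove Proposition~\ref{hs}; it is quoted from Hartshorne--Speiser, so there is no in-text argument to compare yours against. Judged on its own, your proof is correct for everything the displayed sequence asserts. The opening reduction $\langle F^j(M)\rangle_k=F^j(M)$ via $cF^j(m)=F^j(c^{1/p^j}m)$ is exactly the right use of perfectness, and it cleanly powers both halves. In part (1), the finite-length $F$-invariant module $C\supseteq M_s$ (finite-dimensionality of $M_s$ from Theorem~\ref{hsl}), together with the observation that $F(\fm^{n_0-1}C)\subseteq\fm^{(n_0-1)p}C=0$ for a minimal $n_0\ge 2$, correctly forces $\fm C=0$; only injectivity of $F|_C$ is used there, so the bijectivity remark is harmless but superfluous. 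In part (2), the step $v\in M_s\cap N\Rightarrow v=F^j(w)$ with $\pi(w)=0$ by injectivity of $F_L^j$ is the heart of the matter and is right.

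The one soft spot is the parenthetical on surjectivity at $L_s$. The statement is worded as a ``short exact sequence'' even though the display carries no terminal zero; if surjectivity of $M_s\to L_s$ is intended, your sketch is not yet a proof. You correctly get $0\to F^j(N)\to F^j(M)\to F^j(L)\to 0$ for each $j$, but passing to the intersection over $j$ requires showing that the decreasing chain of nonempty cosets $\pi^{-1}(y)\cap F^j(M)$ (cosets of the possibly non-stabilizing subspaces $F^j(N)$) has nonempty intersection, and ``finite-dimensionality of the stable parts'' does not by itself supply the needed Mittag--Leffler-type condition. Since the paper only ever invokes the left-exact portion (in the proof of Theorem~\ref{main}, where moreover $N_s=0$), this gap does not affect anything downstream, but you should either prove the surjectivity properly or state explicitly that you are establishing left-exactness only.
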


We are now in position to state the main result of this section.

\begin{Thm}
\label{main}
 Let $\ringR$ be a Noetherian local ring containing a
coefficient field $k$. Let $M$ be an Artinian $R$-module which
admits an injective Frobenius action.

Then $M$ is $F$-stable if and only if $M_s \neq 0$. Also, $M_s
\subseteq \Soc_R(M)$.

\end{Thm}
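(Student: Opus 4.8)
The plan is to prove both directions by relating F-stability (in the socle sense of Fedder--Watanabe) to non-vanishing of the stable part $M_s$, and along the way to obtain the inclusion $M_s\subseteq\Soc_R(M)$. The statement $M_s\subseteq\Soc_R(M)$ follows from part (1) of Proposition~\ref{hs} once we reduce to a perfect coefficient field: if $k$ is not perfect, pass to its perfect closure $K$, use that $(M^K)_s=K\otimes_k M_s$ by Theorem~\ref{hsl} together with $\Soc_{R^K}(M^K)=K\otimes_k\Soc_R(M)$, and descend the inclusion $(M^K)_s\subseteq\Soc_{R^K}(M^K)$ back to $M$. Note also that faithful flatness of $R\to R^K$ shows that $F$ acts injectively on $M$ if and only if it does on $M^K$, so the hypothesis transfers. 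Thus throughout I may assume $k$ is perfect and $F$ is injective on $M$.

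For the direction \emph{$M_s\neq 0\Rightarrow M$ is F-stable}: since $M_s\subseteq\Soc_R(M)=S$ and, by Theorem~\ref{hsl}, the $k$-span of $F(M_s)$ equals $M_s$ (so $F^e(M_s)$ spans $M_s$ for every $e$), any nonzero $\eta\in M_s$ satisfies $F^e(\eta)\in\langle F^e(M_s)\rangle_k\subseteq M_s\subseteq S$ for all $e$. Hence $0\neq\eta\in S$ with $F^e(\eta)\in S$ for all $e\ge 0$; by Remark~\ref{charac} this gives $\fm\in\Gamma$ — or directly, one checks $S\cap F^e(S)\supseteq \langle F^e(\eta)\rangle_k\cdot$(something nonzero) for infinitely many $e$. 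Slightly more carefully: choosing $\eta\in M_s$ nonzero, injectivity of $F$ forces $F^e(\eta)\neq 0$ and $F^e(\eta)\in M_s\cap F^e(S)\subseteq S\cap F^e(S)$, so this intersection is nonzero for every $e$, which is exactly F-stability.

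For the converse \emph{$M$ is F-stable $\Rightarrow M_s\neq 0$}: by Proposition~\ref{fw} there is $0\neq\eta\in S$ with $F^e(\eta)\in S$ for all $e\ge 0$. Let $N=C_\eta=\langle F^e(\eta):e\ge 0\rangle_R$, the smallest F-invariant submodule containing $\eta$; since all $F^e(\eta)$ lie in $S$ and $S$ is finite-dimensional over $k$, the module $N$ is a finite-dimensional $k$-vector space on which $F$ acts injectively, with $N\subseteq S$. I claim $N_s=N$: because $F$ is an injective endomorphism of the finite-dimensional $k$-vector space $N$ (after twisting the scalar action, as in the proof of Proposition~\ref{fw}), the descending chain $\langle F^j(N)\rangle_k$ stabilizes and each step is either a proper drop in dimension or an equality; injectivity of $F$ on $N$ forces $\dim_k\langle F(N)\rangle_k=\dim_k N$ eventually, and in fact for all $j$ by a rank argument, so $\langle F^j(N)\rangle_k=N$ for all $j$ and $N_s=N\neq 0$. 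Finally, the inclusion $N\hookrightarrow M$ is F-equivariant, so $N_s\subseteq M_s$ (the stable part of a submodule maps into the stable part of the ambient module, since $\langle F^j(N)\rangle_k\subseteq\langle F^j(M)\rangle_k$); hence $M_s\neq 0$.

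The main obstacle I anticipate is the reduction to a perfect coefficient field — specifically, verifying that both the F-stability property (defined via the socle) and the injectivity of the Frobenius action are preserved under the faithfully flat base change $R\to R^K=K\,\widehat{\otimes}_k R$, and that $(\,\cdot\,)_s$ commutes with this base change as asserted in Theorem~\ref{hsl}, so that the equivalence over $K$ descends to $R$. Once that bookkeeping is in place, the two implications are short: one uses only that $M_s$ is $F$-stable-spanning and sits in the socle (Theorem~\ref{hsl} and Proposition~\ref{hs}(1)) for the forward direction, and the eventual-isomorphism dimension count of Proposition~\ref{fw} for the converse.
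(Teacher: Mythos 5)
Your argument for ``$M$ F-stable $\Rightarrow M_s\neq 0$'' is essentially sound (and close in spirit to the paper's, which works directly with the chain $S\cap\langle F^e(M)\rangle_k$ rather than with $C_\eta$), but the proof of $M_s\subseteq\Soc_R(M)$ --- on which your other direction also depends --- has a genuine gap. You assert that ``faithful flatness of $R\to R^K$ shows that $F$ acts injectively on $M$ if and only if it does on $M^K$.'' The implication you need (injectivity on $M$ implies injectivity on $M^K$) is false: the induced action $F_{M^K}(l\otimes m)=l^p\otimes F(m)$ is $p$-linear, not $R^K$-linear, so faithful flatness says nothing here, and when $K/k$ is inseparable (as the perfect closure is, for $k$ non-perfect) elements of $K$ that are $k$-independent can have $k$-dependent $p$-th powers, producing nonzero elements of $M^K$ killed by $F_{M^K}$. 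Concretely, for $\lambda\in k\setminus k^p$ and a suitable $M$ the element $\lambda^{1/p}\otimes m_1-1\otimes m_2$ can be nonzero with image zero; the paper's own Proposition 4.2 (where $R=k+xL[[x]]$ is F-injective but $k^{1/p}\otimes_k R$ is not even reduced) is exactly this phenomenon. Consequently you cannot apply Proposition 2.9(1) to $M^K$, since that proposition requires an injective action. This is precisely the point the paper's proof is built around: it passes to $M^K/(M^K)_{nil}$, where the action \emph{is} injective, uses the left-exactness statement of Proposition 2.9(2) to get $(M^K)_s\hookrightarrow (M^K/(M^K)_{nil})_s\subseteq\Soc_{R^K}(M^K/(M^K)_{nil})$, concludes only that $\fm^K\cdot(1\otimes\eta)$ lands in $(M^K)_{nil}$, and then descends to $\fm\eta=0$ in $M$ using injectivity of $F$ on $M$ itself. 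Your proof needs this detour (or an equivalent one); as written the reduction to a perfect coefficient field does not go through.

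A smaller point: in the converse direction your claim that injectivity of $F$ on the finite-dimensional space $N$ forces $\dim_k\langle F(N)\rangle_k=\dim_k N$ is also false over a non-perfect $k$ (e.g.\ $F(a,b)=(a^p+\lambda b^p,0)$ with $\lambda\notin k^p$ is injective but its image spans a line), so $N_s=N$ need not hold. This one is harmless: each term $\langle F^j(N)\rangle_k$ of the descending chain contains the nonzero element $F^j(\eta)$, the chain stabilizes by finite-dimensionality, and hence $N_s\neq 0$, which is all you use.
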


\begin{proof}
Assume that $M$ is F-stable. Then there exists $m \in M$ such that $\fm = \Fann(m)$, by Remark~\ref{charac}. Therefore, $$\fm \cdot F^e(m) =0,$$
for every $e \geq 0$.

Let $S=\Soc(M)$ and denote $M_e = S \ \cap \langle F^e(M) \rangle_k$.

Note that $M_e$ is a $k$-vector subspace of $S$. It can be easily checked that $M_{e+1} \subseteq M_e$ since $F^{e+1}(M) \subseteq F^e(M)$.

But $F^e(m) \in \langle F^e(M) \rangle_k$ and since $\fm \cdot F^e(m)=0$ we
conclude that $F^e(m) \in M_e$. Moreover, $F$ acts injectively on
$M$ hence $F^e(m) \neq 0$ for all $e$. So, $M_e \neq 0$.

Since $S$ is a finite dimensional $k$-vector space and $\{ M_e \}_e$ forms a descending chain of $k$-vector subspaces, there
exists $e_0$ such that $M_e =M_{e_0}$ for all $ e\geq e_0$. But $M_e \neq 0$ for all $e$. Hence

 $$ 0 \neq M_{e_0} = \cap_{e} M_e \subseteq \cap_e \langle F^e(M) \rangle_k=M_s,$$
proving the first part of the theorem.

Let $K$ be the perfect closure of $k$ and $M^K=  R^K \otimes _R M = K \otimes_k M$ where $R^K = R \hat{\otimes}_k K$.
We have that $M_s \subset (M^K)_s$.

Consider the following short exact sequence:

$$0 \to (M^K)_{nil} \to M^K \to M^K/(M^K)_{nil} \to 0,$$
where the Frobenius actions on each module are naturally compatible.
Note that the Frobenius action on $M^K/(M^K)_{nil}$ is injective.
Applying Proposition~\ref{hs} we get the following exact sequence

$$ 0 \to (M^K)_s \to (M^K/(M^K)_{nil})_s.$$

Since $F$ acts injectively on $M^K/(M^K)_{nil}$ then Propostion~\ref{hs}
(1) applies and we get $$(M^K/(M^K)_{nil})_s \subseteq
\Soc_{R^K}(M^K/(M^K)_{nil}).$$

Now assume $M_s \neq 0$ and let $\eta \in M_s$ arbitrarly chosen.
Then $1 \otimes \eta \in (M^K)_s$.

Using the inclusions

$$(M^K)_s \subseteq (M^K/(M^K)_{nil})_s \subseteq
\Soc_{R^K}(M^K/(M^K)_{nil}),$$ we obtain that $\fm ^K \cdot (1 \otimes
\eta) \in (M^K)_{nil}$.

However, $(1 \otimes \fm) \cdot (1 \otimes \eta) \subset \fm ^K
\cdot (1 \otimes \eta) \in (M^K)_{nil}.$ Hence there exists $e$ such
that $F^e ( 1 \otimes \fm \eta) = 0 \in M^K.$ But $F^e ( 1 \otimes
\fm \eta) = 1 \otimes F^e(\fm \eta)$.

Moreover, $M \subset M^K$ and hence $F^e( \fm \eta) = 0$ in $M$. But
$F$ acts injectively on $M$. Therefore, $\fm \eta =0$ or, equivalently, $\eta \in
\Soc(M)$. This shows that $M_s \subseteq \Soc(M)$.

To finish the proof, let $ 0 \neq \eta \in M_s$. Then since $F$ acts
injectively on $M$ we have that $0\neq F^e(\eta)$ and $F^e(\eta) \in
M_s$. So, $F^e(\eta) \in \Soc_R(M)$, or $\fm \cdot F^e(\eta) =0$ for
all $e$. Therefore $M$ is F-stable, since $\fm = \Fann(\eta)$.
\end{proof}

\begin{Cor}
Let $\ringR$ be a local Noetherian F-injective ring containing a coefficient field.

Then the following assertions are equivalent:

\begin{enumerate}
\item
$R$ is F-stable;
\item
there exists $i$ and $0 \neq \eta_i \in H^i_{\fm}(R)$ such that $\fm
= \Fann(\eta_i)$;
\item
there exists $i$ such $H^i_{\fm}(R)_s \neq 0$.
\end{enumerate}

\end{Cor}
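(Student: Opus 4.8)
The plan is to deduce the corollary directly from Theorem~\ref{main} together with the definitional unravelling recorded in Remark~\ref{charac}. First I would recall that, by definition, $R$ is $F$-stable precisely when there is no index $i$ for which $H^i_{\fm}(R)$ is $F$-unstable; equivalently, $R$ is $F$-stable if and only if $H^i_{\fm}(R)$ is $F$-stable (in the sense of the Fedder--Watanabe definition) for at least one $i$. Here the hypothesis that $R$ is F-injective is exactly what guarantees that $F$ acts injectively on every $H^i_{\fm}(R)$, so each of these Artinian modules falls under the scope of Theorem~\ref{main} and Remark~\ref{charac}.

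Next I would chase the three equivalences around a cycle. For $(1)\Leftrightarrow(3)$: $R$ is $F$-stable iff $H^i_{\fm}(R)$ is $F$-stable for some $i$; since $R$ contains a coefficient field and $F$ acts injectively on each $H^i_{\fm}(R)$, Theorem~\ref{main} applied to $M = H^i_{\fm}(R)$ gives that $H^i_{\fm}(R)$ is $F$-stable iff $H^i_{\fm}(R)_s \neq 0$. Taking the disjunction over $i$ yields $(1)\Leftrightarrow(3)$. For $(1)\Leftrightarrow(2)$: fix $i$ and apply Remark~\ref{charac} to $M = H^i_{\fm}(R)$, which (under the injective Frobenius action coming from F-injectivity) states that $H^i_{\fm}(R)$ is $F$-stable iff there exists $0 \neq \eta_i \in H^i_{\fm}(R)$ with $\fm = \Fann(\eta_i)$. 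Again taking the disjunction over $i$, and using that $R$ is $F$-stable iff some $H^i_{\fm}(R)$ is $F$-stable, gives $(1)\Leftrightarrow(2)$.

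I would then assemble these into a short formal proof: note that Theorem~\ref{seh} (or Proposition~\ref{fw}) applies to each $M=H^i_{\fm}(R)$ because F-injectivity of $R$ forces the induced Frobenius action on $H^i_{\fm}(R)$ to be injective, so Remark~\ref{charac} is available for every $i$; then state the two biconditionals above and conclude. The only subtlety worth a sentence is making explicit that "$R$ is $F$-stable" as defined means "$H^i_{\fm}(R)$ is $F$-unstable for \emph{every} $i$ fails," i.e. $H^i_{\fm}(R)$ is $F$-stable for \emph{some} $i$ — it is the negation of a universally quantified statement, hence an existential one, which is why each of (1), (2), (3) is an "$\exists i$" statement.

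I do not expect any real obstacle here: the corollary is essentially a bookkeeping consequence of already-proved results. If anything needs care, it is simply confirming that F-injectivity of $R$ (not merely of $R$ at some fixed cohomological degree) is what licenses invoking Theorem~\ref{main} and Remark~\ref{charac} simultaneously for all $i$, which is immediate from the definition of F-injective.
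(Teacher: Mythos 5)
Your proposal is correct and matches the paper's (implicit) argument: the corollary is stated in the paper without proof, as an immediate consequence of Theorem~\ref{main} and Remark~\ref{charac} applied to each $M=H^i_{\fm}(R)$, with F-injectivity supplying the injective Frobenius action needed to invoke both. Your careful unpacking of the quantifier in the definition of $F$-stability of $R$ is exactly the right bookkeeping.
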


\section{F-stable primes}

In this section we will apply the results of the previous section to
the top local cohomology module of a Cohen-Macaulay local ring.

Let $\ringR$ be a local Noetherian ring of positive characteristic
$p$ and dimension $d$. Then $R$ admits a natural Frobenius action $F
:R \to R$, defined by $F(r) = r^p$ for all $r \in R$. This action
induces a Frobenius action on the local cohomology modules
$H^i_{\fm}(R)$, $i = 0, \ldots, d.$

If $R$ is Cohen-Macaulay then there is only one nonzero local
cohomology module, namely $\hr$. As defined in Section 2, in this
case $R$ is F-stable if and only if $\hr$ is F-stable.

We will focus our attention now on two interesting sets of prime ideals in $R$ that one can
define in relation to the action of Frobenius on $\hr$.

\begin{Def}
\label{sets}
Let $R$ be a Cohen-Macaulay ring of positive characteristic $p$.

Let $$\fA=\fA(R) = \{ P \in \Spec(R) : R_P {\rm \ is } \ {\rm
F-stable} \}.$$

If $\ringR$ is local then let

$$\fB=\fB(R) = \{ P \in \Spec(R) :
{\rm there \ exists } \ \eta \in \hr {\rm \ such \ that} \ P =
\Fann(\eta) \}.$$

\end{Def}

We remark that Theorem~\ref{seh} implies that when $R$ is
F-injective and Cohen-Macaulay then $\fB(R)$ is finite. Also, when $R$ is F-injective and Cohen-Macaulay, $R$ is
F-stable if and only if $\fm \in \fB(R)$ (due to
Remark~\ref{charac}).

Let $x_1, \ldots, x_d$ be a system of parameters in $R$ and denote $I = (x_1, \ldots, x_d)$. The local
cohomology module $\hr$ can be obtained as a direct limit of
$R/(x_1^t,\ldots, x_d^t)$ where the maps of the direct system are
given by

$$R/(x_1^t,\ldots, x_d^t) \buildrel{\bx ^{l-t}}\over\to R/(x_1^l,\ldots, x_d^l),$$
where $l \geq t$ and $\bx = x_1\cdots x_d$.

Let $x \in R$ and define $\widetilde{I(x)} : =\{ c \in R : cx^q \in I\brq, {\rm
for \ all \ } q \geq 0\}$. Also, let $I(x) : =\{ c \in R : cx^q \in I\brq, {\rm
for \ all \ } q \gg 0\}$ as in Definition 1.1 in~\cite{E}. Clearly, $\widetilde{I(x)} \subseteq I(x)$.

Let $\overline{x}$ be the class of $x$ in $R/(x_1,\ldots, x_d)$ and
let $\eta \in \hr$ be the image of $\overline{x}$ via

$$R/(x_1,\ldots, x_d) \to \hr.$$

Then $\Fann(\eta) = \widetilde{I(x)}$ whenever $R$ is Cohen-Macaulay.

Conversely, consider $0 \neq \eta \in \hr$ and let $x$ and $t$ such
that $\eta$ is the image of $\overline{x} \in R/(x_1^t,...,x_d^t)$
under the natural inclusion
$$R/(x_1^t,\ldots, x_d^t) \to \hr.$$

Then let $J=I_t=(x_1^t, \ldots, x_d^t)$, and note that, when $R$ is Cohen-Macaulay, $J(x)=\widetilde{I_t(x)}=
\Fann (\eta)$.

\begin{Rem}
\label{alt}

(i) Using the notation just introduced we have
$$\fB (R) = \{ P \in \Spec(R) : I(x) = P  {\rm \ for \ some \ }
I=(x_1, \ldots, x_d) {\rm \ generated \ by \ parameters \ and  \ } x
\in R
 \}.$$

(ii) An important observation is that $I(x) = \widetilde{I(x)}$, whenever $R$ is F-injective and Cohen-Macaulay. 

For the proof of this claim, first note that $R$ is F-injective and Cohen-Macaulay then $J =J^F$, for any ideal $J$ generated by a system of 
parameters. Then $cx^{p^e} \in I^{[p^e]}$ implies that $c^px^{p^e} \in I^{[p^e]}$ 
or, equivalently, $(cx^{p^{e-1}})^p \in (I^{[p^{e-1}]})^{[p]}.$ But $I^{[p^{e-1}]}$ is an ideal generated
by a system of parameters, so it is Frobenius closed. Therefore, we deduce that $c x^{[p^{e-1}]} \in I^{[p^{e-1}]}$. This shows
that $cx^{p^e} \in I^{[p^e]}$ implies that $c x^{[p^{e-1}]} \in I^{[p^{e-1}]}$, which gives our claim.

Therefore, whenever $R$ is F-injective and Cohen-Macaulay, $I$ is an ideal generated by a system of parameters and $x \in R$, 
we can and will use the notation $I(x)$ versus $\widetilde{I(x)}$ without further explanation.

\end{Rem}

 These comments allow one to remark to state Propositions 2.6 and 2.7 proven
 in~\cite{E} in the following concise form. Please note that whenever $X$ is a partially order
set, the notation $\Max(X)$ refers to the maximal elements of $X$ under the order relation.

\begin{Thm}
\label{complete}
Let $\ringR$ be a local Cohen-Macaulay F-injective complete ring of
dimension $d$.

Then $\fB\subseteq \fA$ and $\Max(\fA) = \Max(\fB)$.
\end{Thm}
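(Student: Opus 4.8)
The plan is to prove the two assertions $\fB \subseteq \fA$ and $\Max(\fA) = \Max(\fB)$ by translating everything into the language of the annihilator ideals of F-invariant submodules of $\hr$ and applying Theorem~\ref{main} and Theorem~\ref{seh} after localizing. Throughout, $R$ is complete local Cohen-Macaulay F-injective, so $\hr$ is the unique nonzero local cohomology module, it admits an injective Frobenius action (F-injectivity), and by Remark~\ref{alt}(ii) we may write $I(x)$ for $\widetilde{I(x)}$.

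For the inclusion $\fB \subseteq \fA$: take $P \in \fB$, so $P = \Fann(\eta)$ for some $\eta \in \hr$. First I would observe that $P$ is prime (given) and, by Theorem~\ref{seh} applied to $M = \hr$, it is the annihilator of an F-invariant submodule, hence radical; the real content is that $R_P$ is F-stable. The key step is to localize the Frobenius action: $\hr \otimes_R R_P$ carries an induced Frobenius action, and since $R$ is Cohen-Macaulay F-injective, $R_P$ is Cohen-Macaulay F-injective of dimension $\dim R/P$... but that is not quite $d$, so one must be careful. The correct approach is: choose a system of parameters $x_1,\dots,x_d$ and $x$ with $\eta$ the image of $\bar x$ and $P = I(x)$; then extend a subsystem to a system of parameters of $R_P$. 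Concretely, since $\het(P) = d - \dim(R/P)$ and $P$ contains $I = (x_1,\dots,x_d)$ only up to radical in general, I would instead argue as follows: the condition $P = \Fann(\eta)$ means $P \cdot F^e(\eta) = 0$ for all $e$ and $P$ is maximal with this property among radical ideals in $\Gamma$; passing to $R_P$, the image of $\eta$ generates an F-invariant submodule of the local cohomology of $R_P$ whose annihilator is $PR_P$, the maximal ideal of $R_P$, so by Remark~\ref{charac} (equivalently Theorem~\ref{main}) $R_P$ is F-stable, i.e. $P \in \fA$. The main obstacle here is checking that localization at $P$ genuinely sends the F-invariant submodule $C_\eta \subseteq \hr$ with annihilator $P$ to an F-invariant submodule of $H^{\dim R_P}_{PR_P}(R_P)$ with annihilator the maximal ideal — this requires identifying the relevant local cohomology module after localization, which is where one uses that $I(x)$ commutes with localization (the cited Theorem~4.5-style argument of \cite{HH94} with $c=1$, already invoked in the introduction for F-injectivity) together with the fact that $P = I(x)$ has height equal to the number of the $x_i$ needed, so that (a power of) $x$ together with those $x_i$ becomes a valid ``numerator/parameter'' datum in $R_P$.

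For $\Max(\fA) = \Max(\fB)$: the inclusion $\fB \subseteq \fA$ gives $\Max(\fB) \subseteq \fA$, so it suffices to show each element of $\Max(\fB)$ is maximal in $\fA$, and conversely each element of $\Max(\fA)$ lies in $\fB$. For the first: if $P \in \Max(\fB)$ and $P \subsetneq Q$ with $Q \in \fA$, then $R_Q$ is F-stable, so by Remark~\ref{charac} there is $\eta \in H^{\dim R_Q}_{QR_Q}(R_Q)$ with $QR_Q = \Fann(\eta)$; I would then ``spread out'' this datum back to $R$ — writing $\eta$ via parameters and a numerator $x$ in $R_Q$, clear denominators to get $x_1,\dots,x_d \in R$ (parameters for $R$) and $x \in R$ with $Q$ minimal over $I(x) + \text{(stuff)}$, and use that $\widetilde{I(x)}$ in $R$ is radical and its minimal primes lie in $\fB$ (Theorem~\ref{seh}: $\Gamma$ closed under primary decomposition), producing $P' \in \fB$ with $P' \subseteq Q$ and $P' \supseteq$ something forcing $P \subseteq P' \subsetneq$ or $= Q$, contradicting $P \in \Max(\fB)$ unless $P = Q$. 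For the reverse, $P \in \Max(\fA)$: $R_P$ F-stable gives $PR_P = \Fann(\eta)$ for some $\eta$ in the top local cohomology of $R_P$; spreading out as above yields a radical ideal $\widetilde{I(x)} \subseteq R$ whose minimal primes are in $\fB \subseteq \fA$, and $P$ is among them (it is the one realizing the F-stability witness after localization), so $P \in \fB$. The crux throughout is the spreading-out/descent dictionary between F-stability data over $R_P$ and annihilator-of-F-invariant-submodule data over $R$, packaged exactly by the identity $\Fann(\text{image of }\bar x) = \widetilde{I(x)}$ and its compatibility with localization; once that dictionary is set up cleanly the poset statement is formal. I expect the localization-compatibility of $\widetilde{I(x)}$ — really the statement that the relevant annihilator ideals behave well under $R \to R_P$ for Cohen-Macaulay F-injective $R$ — to be the single step requiring the most care, and I would isolate it as a lemma (essentially Propositions 2.6 and 2.7 of \cite{E} reformulated via Remark~\ref{alt}) before assembling the theorem.
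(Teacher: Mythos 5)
Your plan identifies the right dictionary --- $\Fann(\eta)=\widetilde{I(x)}=I(x)$ from Remark~\ref{alt}, the characterization of F-stability via Remark~\ref{charac}, and the closure of $\Gamma$ under primary decomposition from Theorem~\ref{seh} --- and this is indeed the intended route; note that the paper itself supplies no proof here, stating the theorem explicitly as a ``concise form'' of Propositions~2.6 and~2.7 of~\cite{E}. But as written the proposal has a genuine gap: the one step that carries all the content, namely converting the datum ``$P=\Fann(\eta)$ for some $\eta\in\hr$'' into the datum ``$PR_P=\Fann(\eta')$ for some $\eta'$ in the top local cohomology of $R_P$'' (and back again, for the statement about maximal elements), is acknowledged as ``the main obstacle'' and then deferred to a lemma that is essentially the statement being proved. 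Naive localization cannot work: $\hr$ is $\fm$-torsion, so $\hr\otimes_R R_P=0$ for every $P\neq\fm$, and ``the image of $\eta$'' in the local cohomology of $R_P$ does not exist without re-presenting $\eta$ by entirely new data. Moreover the re-presentation you gesture at rests on a false premise: you write that $P$ contains $I=(x_1,\dots,x_d)$ ``up to radical,'' but $I$ is $\fm$-primary, so any prime containing it (even up to radical) is $\fm$ itself; for a nonmaximal $P\in\fB$ one has $IR_P=R_P$, and there is no reason any of the original parameters lie in $P$. One must instead choose a fresh partial system of parameters $y_1,\dots,y_h\in P$ (with $h=\het(P)$) that localizes to a full system of parameters of $R_P$, and prove that the annihilator data for $\hr$ descends to annihilator data for $H^h_{PR_P}(R_P)$; this is precisely the content of Lemma~1.4, Proposition~1.5, Theorem~2.1 and Propositions~2.6--2.7 of~\cite{E}, none of which is reproduced or replaced by an argument in your sketch.

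Two smaller inaccuracies: the hypothesis $P=\Fann(\eta)$ does not make $P$ maximal in $\Gamma$ (maximality is not part of the definition of $\fB$, yet you invoke it in the first part), and $\dim(R_P)=\het(P)=d-\dim(R/P)$, not $\dim(R/P)$. The second half of your argument, for $\Max(\fA)=\Max(\fB)$, depends on ``spreading out'' F-stability data from $R_Q$ back to $R$; this is again asserted rather than proved and is, if anything, more delicate than the localization direction. In short, the skeleton and the choice of tools are correct and agree with the route the paper points to, but the proof is not actually there: the two propositions of~\cite{E} that you propose to ``isolate as a lemma'' \emph{are} the theorem.
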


It is useful to know how F-stability behaves under a flat local ring
extension that has nice fibers.

\begin{Thm}
\label{flat} Let $\ringR \to \ringS$ be a flat local ring
homomorphism of Cohen-Macaulay F-injective rings.

Assume that the closed fiber $S/\fm S$ is regular and $R$ is
F-stable. Then $\fm S \in \Max(\fB(S)).$ \

In particular if $\ringR$ is local Cohen-Macaulay F-injective and
F-stable then $\widehat{R}$ is local Cohen-Macaulay F-injective and
F-stable.
\end{Thm}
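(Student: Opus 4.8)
The plan is to prove Theorem~\ref{flat} by combining the stability transfer result of Theorem~\ref{main} with a careful tracking of socles, local cohomology, and the hypothesis that the closed fiber is regular. First I would reduce to a statement about the top local cohomology module. Since $R$ and $S$ are Cohen-Macaulay, the only nonzero local cohomology modules are $\Hm{\fm}{d_R}{R}$ and $\Hm{\fn}{d_S}{S}$, where $d_S = d_R + \dim(S/\fm S)$ by flatness with Cohen-Macaulay fiber. The key input from the regularity of the closed fiber is the base-change isomorphism for local cohomology: if $y_1,\dots,y_c$ is a regular system of parameters for the regular local ring $S/\fm S$ (lifted to $S$), then $\Hm{\fn}{d_S}{S} \cong \Hm{\fm}{d_R}{R} \otimes_R \Hm{(y)}{c}{S}$ in a way compatible with Frobenius, and $\Hm{(y)}{c}{S}$ is (a twist of) the injective hull of $k$ over the fiber. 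Concretely, I would present an element of $\Hm{\fn}{d_S}{S}$ as $[\,z/(\mx^s \mathbf{y}^s)\,]$ and note that the Frobenius action is the evident one.

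Next I would exploit F-stability of $R$. By Remark~\ref{charac} applied to $M = \Hm{\fm}{d_R}{R}$, there is $0 \neq \eta \in \Hm{\fm}{d_R}{R}$ with $\fm = \Fann(\eta)$; equivalently, by Theorem~\ref{main}, $\Hm{\fm}{d_R}{R}_s \neq 0$. I would then produce a canonical lift $\tilde\eta \in \Hm{\fn}{d_S}{S}$: writing $\eta = [\,x + (x_1^t,\dots,x_d^t)\,]$, set $\tilde\eta = [\,x + (x_1^t,\dots,x_d^t,\, y_1^t,\dots,y_c^t)\,]$. The claim I would establish is that $\Fann_S(\tilde\eta) = \fn$. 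The inclusion $\fn \cdot F^e(\tilde\eta) = 0$: one checks separately that $\fm S$ kills $F^e(\tilde\eta)$ (this descends from $\fm \cdot F^e(\eta) = 0$ in $R$, using flatness so that the map $R/(x_1^{ep^e},\dots) \to S/(x_1^{\ldots},\dots,y_1^{\ldots},\dots)$ stays injective on the relevant socle elements), and that the $y_i$ kill $F^e(\tilde\eta)$ because a regular system of parameters of a regular local ring generates a Frobenius-closed (indeed parameter) ideal, and $\tilde\eta$ sits in the socle with respect to the $y$-direction by construction — here F-injectivity of $S$ via Remark~\ref{alt}(ii) guarantees $I(x) = \widetilde{I(x)}$ so the "for all $q$" condition is automatic. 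For the reverse inclusion $\Fann_S(\tilde\eta) \subseteq \fn$: since $\tilde\eta \neq 0$ (injectivity of $F$ on $\Hm{\fn}{d_S}{S}$, as $S$ is F-injective) and $\Fann_S(\tilde\eta)$ is a radical ideal by Theorem~\ref{seh}, it suffices to see no prime strictly smaller than $\fn$ can be the annihilator; but $\Fann_S(\tilde\eta) \supseteq \fm S$ forces the contracted ideal in $S/\fm S$ to annihilate a nonzero socle element of a module over the regular local ring $S/\fm S$, and the socle there is killed only by the maximal ideal. Hence $\Fann_S(\tilde\eta) = \fn$, so $\fn \in \fB(S)$, i.e. $S$ is F-stable, and then $\fm S = \fn$...

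Here I need to be careful: the statement is $\fm S \in \Max(\fB(S))$, \emph{not} $\fn \in \fB(S)$ — these coincide only if $\fm S = \fn$, which holds precisely when the closed fiber is a field. In general I would instead argue as follows. Localize and complete at $\fm S$: pass to $S_{\fm S}$, whose closed fiber over $R$ is $\mathrm{Frac}(S/\fm S)$, hence $\fm S_{\fm S}$ is the maximal ideal and $S_{\fm S}/\fm S_{\fm S} = k$, a field — so this is the special case above, giving that $S_{\fm S}$ is F-injective Cohen-Macaulay and F-stable, whence by Remark~\ref{charac} there is $\bar\eta \in \Hm{\fm S_{\fm S}}{d_R}{S_{\fm S}}$ with $\Fann(\bar\eta) = \fm S_{\fm S}$. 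Now I clear denominators to descend this to $S$: multiplying a representative of $\bar\eta$ by a suitable element of $S \setminus \fm S$ yields $\eta' \in \Hm{\fn}{d_S}{S}$ with $\Fann_S(\eta') = \fm S$ (the localization map on the finite set $\Gamma$ of Theorem~\ref{seh} controls this — one shows $\widetilde{I_t(x)}$ commutes with localization at $\fm S$ exactly as Frobenius closure of parameter ideals does, which is the content cited after Remark~\ref{alt}(ii)). This gives $\fm S \in \fB(S)$. Maximality: if $Q \in \fB(S)$ with $Q \supsetneq \fm S$, then $Q/\fm S$ is a nonzero prime annihilating (in the Frobenius sense) a nonzero element of the top local cohomology of the regular local ring $S/\fm S$ — but a regular local ring is F-regular hence F-rational, and one checks $\fB(S/\fm S) = \{0\}$ for a regular local ring (every $I(x) = I\brq\!$-condition forces $c$ into $I$, so $\widetilde{I(x)} = I:x \subseteq$ the annihilator, and working through, the only F-annihilator arising from top local cohomology of a regular ring is $(0)$), contradiction. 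Thus $\fm S \in \Max(\fB(S))$. Finally, for the last sentence: take $S = \widehat{R}$; the map $R \to \widehat{R}$ is flat local with closed fiber $\widehat{R}/\fm\widehat{R} = k$ a field, so it is regular, and $\widehat{R}$ is Cohen-Macaulay and F-injective (F-injectivity ascends to completion since $\Hm{\fm}{i}{R} \cong \Hm{\fm\widehat R}{i}{\widehat R}$ compatibly with $F$); F-stability then follows from the main part with $\fm\widehat R = \widehat{\fm}$ the maximal ideal. The main obstacle I anticipate is making the "descent of the F-annihilator along localization/completion at $\fm S$" rigorous — i.e. proving that $\widetilde{I_t(x)}$ behaves well under localization at $\fm S$ and that a nonzero F-stable witness downstairs lifts to one whose F-annihilator is exactly $\fm S$ rather than something larger; this is where F-injectivity of $S$ (Remark~\ref{alt}(ii)) and the Frobenius-closure-localizes result quoted in the introduction do the real work.
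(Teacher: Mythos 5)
Your proposal contains a genuine error and a genuine gap, and both occur at the heart of the argument. First, the claim that $\Fann_S(\tilde\eta)=\fn$ is false, and in fact contradicts the theorem you are proving: if $\dim(S/\fm S)>0$ then $\fm S\subsetneq\fn$, and the conclusion $\fm S\in\Max(\fB(S))$ forces $\fn\notin\fB(S)$. Concretely, the parameters $y_i$ of the fiber do \emph{not} kill $F^e(\tilde\eta)$ for $e\geq 1$: taking $S=R[[y]]$, $J=(x_1,\dots,x_d,y)$ and $u$ with $\fm=I(u)$, a relation $yu^q\in(x_1^q,\dots,x_d^q,y^q)S$ would, upon comparing coefficients of $y^1$, yield $u^q\in I^{[q]}R$, contradicting $u\notin I^F=I$. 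You sense something is wrong and pivot, but the pivot does not close the hole. Second, the localization route does not deliver $\fm S\in\fB(S)$: membership in $\fB(S)$ requires a witness $\eta'$ in the top local cohomology module $H^{\dim S}_{\fn}(S)$ of $S$ itself, whereas F-stability of $S_{\fm S}$ only produces a witness in $H^{\het(\fm S)}_{\fm S S_{\fm S}}(S_{\fm S})$, which is not a subquotient of $H^{\dim S}_{\fn}(S)$ in any way that ``clearing denominators'' can exploit. The passage from ``$S_P$ is F-stable'' (i.e.\ $P\in\fA(S)$) to ``$P\in\fB(S)$'' is precisely the delicate comparison of Theorem~\ref{complete}, which requires completeness and even then is only asserted at maximal elements. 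Your maximality argument has the same defect: a prime $Q\supsetneq\fm S$ in $\fB(S)$ arises as $J'(w)$ for a system of parameters of $S$, and there is no mechanism to convert it into an F-annihilator statement over the fiber $S/\fm S$.

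What the paper actually does is more direct and avoids both problems. With $\fm=I(u)$, $u$ chosen in the socle of $R/I$, and $J=(x_1,\dots,x_d,z_1,\dots,z_n)S$, one shows $J(u)=\fm S$ outright: given $c\in S$ with $cu^q\in J\brq$ for all $q$, pass to $S_q=S/(z_1^q,\dots,z_n^q)S$, use flatness of $R\to S_q$ to contract the colon ideal and get $c\in(I\brq:_R u^q)S_q=\fm S_q$ (since $\fm\subseteq I\brq:_R u^q\subsetneq R$ forces equality with $\fm$), then intersect $\fm S+(z_1^q,\dots,z_n^q)S$ over all $q$ to conclude $c\in\fm S$. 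This puts $\fm S$ in $\fB(S)$ with no localization. Maximality is then obtained by taking a maximal element $J(w)\supseteq\fm S$ of $\fB(S)$ with $w$ movable into the socle of $S/J$, hence (by your correct observation that this socle is generated by the socle of $R/I$) into $R$; contracting $\fm w\brq\in I\brq S_q$ back to $R$ by faithful flatness gives $\fm=I(w)$, and rerunning the displayed argument yields $J(w)=\fm S$. I would encourage you to redo the proof along these lines: the one genuinely load-bearing idea is the contraction of $(I\brq:_{S_q}u^q)$ to $R$ via flatness of $R\to S_q$, which is absent from your proposal.
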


\begin{proof}
Since $R$ is F-stable, then $\fm = \Fann (\eta) = I(u)$, for some
$I= (x_1, \ldots, x_d)$ and $u \in R$. Since $I(u) \subseteq I(ru)$,
for any $r \in R$, we can arrange that the image of $u$ in $R/I$
belongs to $\Soc(R/I)$.

Let $z_1,...,z_n$ be a regular system of parameters for $S/\fm S$.
Note that for any basis of $\Soc(R/I)$ say
$\overline{u_1},...,\overline{u_l}$, where $u_1, \ldots, u_l$ belong
to $R$, their images $\overline{u_1},...,\overline{u_l}$ in $S/(I,
z_1,\ldots, z_n)S$ form a basis for its socle.

Let $J = (x_1, \ldots, x_d, z_1, \ldots, z_n)S$. Note that $0 \neq
\overline{u} \in \Soc(S/J)$. It is clear that $\fm \subseteq J(u)$
and so $\fm S \subseteq J(u)$.

Let us consider $c \in S$ such that $c u^q \in J \brq S$ for all
$q$.

The induced ring homomorphism $R \to S/(z_1^q, \ldots, z_n^q)S$ is
still flat. Let $S_q =S/(z_1^q, \ldots, z_n^q)S.$

Let $c u^q \in J \brq S$ and map this further to $S_q$. Keeping the
same notations for convenience we see that $c \in (I\brq : _{S_q}
u^q)$. Since $R \to S_q$ is flat we get that $c \in (I\brq :_R
u^q)S_q$ for all $q$.

Now, $\fm = I(u) = \cap _q (I\brq : u^q) \subseteq I\brq :
u^q\subseteq \fm.$ So, $\fm = I\brq:u^q$ for all $q$.

In conclusion $c \in \fm S_q$, for all $q$. Pulling back to $S$ we
get that $c \in \fm S + (z_1^q, \ldots, z_n^q)S$ for all $q$. So, $
c\in \fm S$ by taking intersection over all $q$.

We have shown that $\fm S = J(u)$. This automatically proves the Theorem whenever $\dim (S /\fm S) = 0$ since
under our hypotheses this shows $\fn =\fm S = J(u)$. This gives that $S$ is F-stable or, in other words, 
$\fn \in \Max (\fB(S))$ (we use here that $S$ is F-injective and Cohen-Macaulay). In particular, the claim of the Theorem
regarding the completion of $R$ is also proved.

But now let $P = \fm S$ and consider $R \to S_P$. The argument presented above for the case of
zero-dimensional closed fiber shows that  $S_P$ is F-stable. As mentioned at the end of Section $1$, the F-injectivity property 
localizes, so $S_P$ is F-injective.

We have shown that $P= \fm S \in \fB (S)$. We can find a maximal 
element of $\fB (S)$ the form $J(w)$, with $w$ in $S$, containing $P= \fm S$. This follows from Theorem 2.1 in~\cite{E} (which rests on Lemma 1.4 
and Proposition 1.5 of the same paper).
We can assume that $w$ maps to an element in the socle of $S/J$. But this socle is generated
by the socle of $R/I$ via the natural map $R/I \to S/J$. So, we can in fact assume that $w$ belongs to $R$ such that its image in $R/I$ belongs to the socle of $R/I$.

Now, note that since $\fm \in J(w)$ we have $\fm w^{[q]} \in I\brq S_q$ for all $q$, using the same notations introduced 
above. Contracting back 
to $R$ and using that $R \to S_q$ is faithfully flat we get that $\fm w \brq \in I \brq$, for all $q$, which, in turn, 
shows that $\fm = I(w)$. By repeating
the argument displayed earlier we get $P = \fm S = J(w)$, which proves that $P= \fm S$ is a maximal element of $\fB(S)$.

\end{proof}

\begin{Rem}
In our previous Proposition it is necessary to assume that $S$ is
F-injective. This condition cannot be deduced form the other
hypotheses as our Proposition~\ref{example} shows.
\end{Rem}

In what follows we will present a connection between the prime ideals
discussed in Definition~\ref{sets} and a set of prime ideals
discovered by Lyubeznik in his work on Frobenius depth. So we will move our attention to the concept of Frobenius
depth considered by Hartshorne-Speiser, see page 60
in~\cite{Ha} and Lyubeznik, see Definition 4.12 in ~\cite{L} and
Definition 4.1 in~\cite{L2}.

Assume that $R$ is Noetherian and let $P \in \Spec(R)$. We set
$\coheight(P) = \Dim (R/P)$. Let $k=k(P)$ be the residue field of
$R_P$. Consider a copy of $k$ as a coefficient field of
$\widehat{R_P}$ and let $K=K(P)$ be its perfect closure. We let
$R(P)$ be $\widehat{R_P} \otimes_k K$ which is a local ring with a
perfect residue field. Note that $R(P)^K$ is the completion of $R(P)$ at the
maximal ideal $P \widehat{R_P} \otimes K$ (we now refer the reader to the complete tensor product
defined in Section 2). The ring $R(P)^K$ is Noetherian.

Clearly $H^i_{PR_P}(R_P)= H^i_{P \widehat{R_P}}(\widehat{R_P})$ and
$H^i_{PR(P)}(R(P)) =H^i_{P\widehat{R_P}}(\widehat{R_P}) \otimes_k
K$.

Also since $R(P)^K$ is the completion of $R(P) \otimes_k K$ we also
have that 

$$H^i_{PR(P)^K}(R(P)^K) = H^i_{PR(P)}(R(P)).$$

Since $H^i_P(\widehat{R_P})$ is Artinian we in fact have that
$$H^i_{P\widehat{R_P}}(\widehat{R_P})^{K}
=H^i_{P\widehat{R_P}}(\widehat{R_P}) \otimes_k K ,$$
which gives

$$H^i_{P\widehat{R_P}}(\widehat{R_P})^{K} =H^i_{PR(P)}(R(P)) = H^i_{PR(P)^K}(R(P)^K),$$
and hence are in position to apply results of Theorem~\ref{hsl}. In
conclusion, $H^i_{PR(P)}(R(P))_s \neq 0$ if and only if
$H^i_{P\widehat{R_P}}(\widehat{R_P})_s \neq 0$.

\begin{Def}[Hartshorne-Speiser]
Let $R$ be a Noetherian ring with $\dim (R) < \infty$. Using the
notation introduced above, we say that the Frobenius depth of $R$,
denoted $\Fdepth_{HS} (R)$, is

$ \Fdepth_{HS}(R) = \max \{ r : H^i_{PR(P)} (R(P))_s = 0 \ {\rm for \ all
\ } \ i < r -\coheight(P) , \ {\rm for \ all} \ P \in \Spec(R) \}$.

\medskip
 \noindent The considerations above allows us to simplify this to

$ \Fdepth_{HS}(R) = \max \{ r : H^i_{P\widehat{R_P}} (\widehat{R_P})_s = 0 \ {\rm for \ all \ } \ i < r -\coheight(P) , \ {\rm for \ all} \ P \in \Spec(R) \}$.

\end{Def}

To be able to parse through this concept more easily it is helpful
to first introduce a local concept of Frobenius depth.

\begin{Def}
Let $(R, \fm)$ be a local ring. Then the $\Fdepth$ of  $R$ at $\fm$, denoted  $ \Fdepth(\fm, R)$, is

$\Fdepth (\fm , R) = \max \{ r : H^i_{\fm } (\hat{R})_s = 0 \ {\rm
for \ all} \ i < r \} = \min \{ r : H^r_{\fm } (\hat{R})_s \neq 0
\}.$
\end{Def}

It is easy to conclude that

\begin{Rem}
$\Fdepth_{HS}(R) = \Min \{ \Fdepth (PR_P, R_P) + \coheight (P): P \in
\Spec(R) \}$.
\end{Rem}
Note that for the maximal ideal $\fm$ of $R$ we have $\coheight(\fm) =0$, so the inequality 
above implies that $\Fdepth (\fm, R) \leq \Fdepth_{HS}(R) $.

Whenever $R$ is zero dimensional, we can see that $H^0_{\fm } (R) = R$ and $R_s =k \neq 0$. So, in this case 
$\Fdepth (\fm, R) =0$.

Assume that $P$ is a minimal prime ideal of $R$, $R_P$ is a local ring of dimension zero and we 
obtain $0 \leq \Fdepth_{HS} (R) \leq \dim (R/P)$.

If $R$ is Cohen-Macaulay, then its localizations at prime ideals as well as their completions are
Cohen-Macaulay, so $\Fdepth (P, R_P)$ either equals $\dim (R_P)$ or $\infty$ for any prime ideal $P$ in $R$. This implies that 
$\Fdepth_{HS} (R) = \dim (R)$ if $R$ is Cohen-Macaulay.

Using his theory of $F$-modules, Lyubeznik has proven the following
theorem.
\begin{Thm}(Lyubeznik)
Let $R$ be a homomorphic image of a finite type algebra over a
regular local ring.

Then there exist only a finite number of prime ideals $P$ in $R$
such that $H_{P\widehat{(R_P)}}^i (\widehat{R_P}) _s \neq 0$.

\end{Thm}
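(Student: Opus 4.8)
The plan is to deduce this finiteness statement from Lyubeznik's theory of $F$-finite modules together with the reductions already recorded in the excerpt. The key point is that the set of primes $P$ with $H^i_{P\widehat{(R_P)}}(\widehat{R_P})_s \neq 0$ can be studied by working, for each fixed cohomological index $i$, with the local cohomology modules $H^i_{I}(R)$ of $R$ (for $I$ running over ideals), which carry a natural structure of $F$-modules in Lyubeznik's sense when $R$ is a homomorphic image of a finite type algebra over a regular local ring; these are in particular $F$-finite. First I would recall that for an $F$-finite $F$-module $\mathcal{M}$ over such a ring, the \emph{support} of $\mathcal{M}$ is a closed set, and more refined: Lyubeznik's finiteness results give that the associated primes of $\mathcal{M}$ (equivalently, of its underlying module) form a finite set. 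The main translation to carry out is: the condition $H^i_{P\widehat{(R_P)}}(\widehat{R_P})_s \neq 0$ is detected by a suitable associated-prime or support condition on the $F$-module $H^{d-i}_{R/P \text{-related}}$ — more precisely, one uses that localizing and completing commutes with local cohomology and with the formation of $F$-module structure, so that $(H^i_{\fm}(\widehat{R_P}))_s \neq 0$ is equivalent to $P$ lying in a certain subset cut out by the $F$-module data of finitely many $F$-finite modules over $R$.

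The key steps, in order: (1) Fix the ambient hypothesis: write $R = A/\mathfrak{a}$ with $A$ a finite type algebra over a regular local ring, so Lyubeznik's machinery of $F_A$-finite modules applies, and for each $i$ the module $H^i_{\mathfrak{a}}(A)$ (and its iterates) is $F$-finite. (2) For $P \in \Spec(R)$, relate the stable part $H^i_{P\widehat{R_P}}(\widehat{R_P})_s$ to $F$-module data: using the reductions in the paragraph preceding the Frobenius depth definition in the excerpt, one has $H^i_{P\widehat{R_P}}(\widehat{R_P})_s \neq 0$ iff $H^i_{PR(P)}(R(P))_s \neq 0$, and the latter is a statement about an Artinian module with injective Frobenius action on which Lyubeznik's Corollary 4.11-type results apply; the stable part is nonzero precisely when the Frobenius action is not eventually nilpotent on the socle, which in $F$-module language corresponds to $P$ being in the support of the "non-nilpotent part" of an $F$-finite module. (3) Invoke Lyubeznik's finiteness theorem for associated primes (resp. for the set of primes where a given $F$-finite module has nonzero stalk after the relevant operations): there are only finitely many such $P$. (4) Take the union over the finitely many relevant cohomological indices $i$ ($0 \le i \le \dim R$) to conclude finiteness of the full set.

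I expect the main obstacle to be step (2): making precise the equivalence between "$M_s \neq 0$" for the Artinian module $M = H^i_{P\widehat{R_P}}(\widehat{R_P})$ and a clean $F$-module-theoretic condition on a single $F$-finite module over $R$ that varies algebraically in $P$. One has to be careful that the stable part is defined via a coefficient field and completion (as in the Hartshorne-Speiser definition recalled in the excerpt), whereas Lyubeznik's $F$-module support statements live over the original ring $R$; bridging these requires the compatibility identities $H^i_{PR(P)^K}(R(P)^K) = H^i_{PR(P)}(R(P)) = H^i_{P\widehat{R_P}}(\widehat{R_P}) \otimes_k K$ already established in the excerpt, plus the fact from Theorem~\ref{hsl} that $\dim_k M_s$ is unchanged under the perfect-closure base change. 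Once that dictionary is in place, the finiteness is essentially a citation of Lyubeznik's results on $F$-finite modules (finiteness of associated primes / local behavior), and the passage from one index $i$ to all of them is routine since $\dim R < \infty$.

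\begin{proof}
This follows from Lyubeznik's theory of $F$-finite modules. Since $R$ is a homomorphic image of a finite type algebra over a regular local ring, for each $i$ the local cohomology modules built from $R$ carry the structure of $F$-finite $F$-modules in the sense of~\cite{L}. For $P \in \Spec(R)$, the considerations preceding Definition~2.8 show that $H^i_{P\widehat{R_P}}(\widehat{R_P})_s \neq 0$ if and only if $H^i_{PR(P)}(R(P))_s \neq 0$, and by Theorem~\ref{hsl} the dimension of this stable part is detected by the non-nilpotent behavior of the Frobenius action, which in turn is governed by the $F$-module data of finitely many $F$-finite modules over $R$. Lyubeznik's finiteness results for $F$-finite modules then guarantee that, for each fixed $i$, only finitely many $P$ satisfy $H^i_{P\widehat{R_P}}(\widehat{R_P})_s \neq 0$. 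Taking the union over the finitely many indices $0 \le i \le \dim(R)$ gives the claim.
\end{proof}
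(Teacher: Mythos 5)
Your overall strategy --- reduce to Lyubeznik's $F$-module machinery via the perfect-closure compatibilities --- points in the right direction, but as written there is a genuine gap at your step (2), and you identify it yourself. The paper's proof is a direct citation: Proposition 4.14 of~\cite{L} \emph{is} the statement that only finitely many primes $P$ satisfy $H^i_{P\widehat{R_P}}(\widehat{R_P}^K)_s \neq 0$, and the only work done in the paper is to combine this with the already-established identifications $H^i_{P\widehat{R_P}}(\widehat{R_P})^{K} = H^i_{PR(P)}(R(P)) = H^i_{PR(P)^K}(R(P)^K)$ and the fact from Theorem~\ref{hsl} that $\dim_k M_s = \dim_K (M^K)_s$, so that nonvanishing of the stable part is insensitive to the perfect-closure base change. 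Your proposal instead tries to rederive the finiteness from ``finiteness of associated primes of $F$-finite modules,'' but the set $\fC(R)$ is not in any evident way the set of associated primes (or the support) of a single $F$-finite module over $R$: the condition $H^i_{P\widehat{R_P}}(\widehat{R_P})_s \neq 0$ involves completing and base-changing at each $P$ separately and then extracting the Hartshorne--Speiser stable part, and building a dictionary from that to an algebraically-varying $F$-module condition over $R$ is precisely the nontrivial content of Lyubeznik's Proposition 4.14. Your written proof asserts that the stable part ``is governed by the $F$-module data of finitely many $F$-finite modules over $R$'' without establishing this, so the argument does not close.

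The repair is simple and is what the paper does: cite Proposition 4.14 of~\cite{L} for the finiteness of $\{P : H^i_{P\widehat{R_P}}(\widehat{R_P}^K)_s \neq 0\}$, then remove the superscript $K$ using the displayed identifications and Theorem~\ref{hsl}. Your step (4), taking the union over the finitely many indices $i \le \dim R$, is fine but is not even needed if one quotes Lyubeznik's proposition in the form that already quantifies over $i$.
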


\begin{proof}

Proposition 4.14 in~\cite{L} states that there exist only finitely
many prime ideals $P$ such that  $H^i_{P\widehat{R_P}}
(\widehat{R_P}^K)_s \neq 0$

But as noticed $H^i_{P\widehat{R_P}}
(\widehat{R_P}^K)_s \neq 0$ if and only if $H_{P\widehat{R_P}}^i (\widehat{R_P}) _s \neq 0$.

\end{proof}

This leads us to consider the following set.

\begin{Def}
Let $\ringR$ be a Noetherian ring of positive characteristic $p$.

Let $\fC(R) =\{ P \in \Spec(R) : \ {\rm there \ exists \ } h \ {\rm
such \ that \ } H_{P\widehat{(R_P)}}^h (\widehat{R_P}) _s \neq 0 \}$.

\end{Def}

The investigations of Section 1 allow us to state the following
result.

\begin{Prop}
Let $R$ be a Cohen-Macaulay F-injective ring.

Then $$\fA(R) = \fC(R).$$

\end{Prop}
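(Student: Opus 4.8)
The plan is to show the two inclusions $\fA(R) \subseteq \fC(R)$ and $\fC(R) \subseteq \fA(R)$, reducing everything to the case of the local ring $R_P$ via the definitions, and then invoking the main theorem of Section~1 (Theorem~\ref{main}) together with the Cohen-Macaulay hypothesis. Fix $P \in \Spec(R)$. Since $R$ is Cohen-Macaulay and F-injective, $R_P$ is again local Cohen-Macaulay F-injective by the localization facts recalled at the end of Section~1, and likewise $\widehat{R_P}$ is local Cohen-Macaulay F-injective. The key point is that all three sets defined in Definition~\ref{sets} and $\fC(R)$ are insensitive to completion in the Cohen-Macaulay F-injective setting, so membership of $P$ in either set is really a statement about $\widehat{R_P}$.

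First I would handle $\fA(R) \subseteq \fC(R)$. If $P \in \fA(R)$ then $R_P$ is F-stable, i.e.\ (by the definition of F-stability for local rings, and since $R_P$ is Cohen-Macaulay so that $H^i_{PR_P}(R_P)=0$ for $i \neq \dim R_P$) the module $H^{\dim R_P}_{PR_P}(R_P)$ is F-stable. Now $\widehat{R_P}$ contains a coefficient field (it is complete local), and $H^{\dim R_P}_{P\widehat{R_P}}(\widehat{R_P})$ carries an injective Frobenius action because $\widehat{R_P}$ is F-injective; moreover F-stability of this module is unchanged on passing from $R_P$ to $\widehat{R_P}$ since $H^i_{PR_P}(R_P) = H^i_{P\widehat{R_P}}(\widehat{R_P})$ as modules with Frobenius action (a fact used repeatedly in the preceding paragraphs of the paper). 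By Theorem~\ref{main}, F-stability of $H^{\dim R_P}_{P\widehat{R_P}}(\widehat{R_P})$ is equivalent to $H^{\dim R_P}_{P\widehat{R_P}}(\widehat{R_P})_s \neq 0$, and this exhibits $h = \dim R_P$ as a witness to $P \in \fC(R)$.

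For the reverse inclusion $\fC(R) \subseteq \fA(R)$, suppose $P \in \fC(R)$, so $H^h_{P\widehat{R_P}}(\widehat{R_P})_s \neq 0$ for some $h$. Since $\widehat{R_P}$ is Cohen-Macaulay, the only nonzero local cohomology module is in degree $d' := \dim R_P$, so necessarily $h = d'$, and $H^{d'}_{P\widehat{R_P}}(\widehat{R_P})_s \neq 0$. Again $\widehat{R_P}$ contains a coefficient field and the Frobenius action on this module is injective by F-injectivity, so Theorem~\ref{main} applies in the other direction to give that $H^{d'}_{P\widehat{R_P}}(\widehat{R_P})$ is F-stable; transporting back along the isomorphism with $H^{d'}_{PR_P}(R_P)$, we conclude $R_P$ is F-stable, i.e.\ $P \in \fA(R)$.

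I expect the main obstacle to be bookkeeping rather than anything deep: one must be careful that ``F-stability of the local ring $R_P$'' really reduces to F-stability of the single top local cohomology module (this uses Cohen-Macaulayness), that the relevant modules are Artinian over the complete ring so that Theorem~\ref{main} and the coefficient-field hypothesis are legitimately available, and that the isomorphism $H^i_{PR_P}(R_P) \cong H^i_{P\widehat{R_P}}(\widehat{R_P})$ is compatible with the Frobenius actions so that F-stability and the stable part both transfer. All of these have been set up in the preceding discussion, so the proof is essentially an assembly of Theorem~\ref{main}, the Cohen-Macaulay vanishing of local cohomology, and the completion-invariance already established; no new estimates are needed.
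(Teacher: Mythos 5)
Your proposal is correct and follows essentially the same route as the paper: localize, pass to $\widehat{R_P}$ (which is complete, hence has a coefficient field, and remains Cohen--Macaulay and F-injective), use Cohen--Macaulayness to pin the relevant cohomological degree to $\height(P)$, and apply Theorem~\ref{main} in both directions. The only cosmetic difference is that you justify the transfer of F-stability between $R_P$ and $\widehat{R_P}$ via the Frobenius-compatible identification $H^i_{PR_P}(R_P)=H^i_{P\widehat{R_P}}(\widehat{R_P})$, whereas the paper simply invokes the completion statement from Theorem~\ref{flat}; both are legitimate.
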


\begin{proof}
Since the F-injectivity property localizes, we conclude that $R_P$
is F-injective for any prime ideal $P$ of $R$. Also, $R_P$ is
Cohen-Macaulay as well.

Let $P \in \fA(R)$, that is, $R_P$ is F-stable. Then $\widehat{R_P}$ is
F-stable as well. The ring $\widehat{R_P}$ is complete and we can apply
Theorem~\ref{main} to it. Assume that $\height (P) =h$. Then
$H^h_{P}(\widehat{ R_P})_s \neq 0$. Therefore, $P \in \fC(R)$.

Conversely, if for a prime ideal $P$ we have that $H^h_{P}(\widehat{
R_P})_s \neq 0$ then $h =\height (P)$ and $\widehat{R_P}$ is F-stable
as a consequence of Theorem~\ref{main}. This implies that $R_P$ is
F-stable and hence $P \in \fA(R)$.

\end{proof}

\begin{Cor}
Let $R$ be a Cohen-Macaulay F-injective ring. Assume that $R$ is a
homomorphic image of an algebra of finite type over a regular local
ring.

Then $\fA(R)$ is finite.

\end{Cor}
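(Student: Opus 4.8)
The plan is to read the result off directly from the two statements that immediately precede it. The Proposition just established shows that for a Cohen--Macaulay F-injective ring one has $\fA(R) = \fC(R)$, so it suffices to prove that $\fC(R)$ is finite. But $\fC(R)$ is by definition the set of primes $P$ for which $H^h_{P\widehat{R_P}}(\widehat{R_P})_s \neq 0$ for some $h$, and Lyubeznik's theorem quoted above asserts precisely that this set is finite whenever $R$ is a homomorphic image of a finite type algebra over a regular local ring. Both hypotheses are in force here, so the conclusion follows.

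More precisely, first I would invoke the preceding Proposition: its proof uses only that F-injectivity localizes and that each $\widehat{R_P}$ is a complete Cohen--Macaulay F-injective local ring, so that Theorem~\ref{main} applies and gives $P \in \fA(R)$ if and only if $H^{\height(P)}_{P\widehat{R_P}}(\widehat{R_P})_s \neq 0$; hence $\fA(R) = \fC(R)$ with no change of hypotheses. Second, I would apply Lyubeznik's finiteness theorem to the ring $R$ to conclude that $\fC(R)$, and therefore $\fA(R)$, is a finite set.

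The only point requiring a word of care is reconciling the two descriptions of the relevant set: the definition of $\fC(R)$ permits an arbitrary cohomological degree $h$, whereas in the Cohen--Macaulay setting the degree is forced to be $\height(P)$, since $\widehat{R_P}$ has a single nonvanishing local cohomology module. This is already built into the proof of the preceding Proposition, so there is essentially no obstacle: the corollary is a formal combination of that Proposition with Lyubeznik's theorem, and I do not expect any genuinely hard step.
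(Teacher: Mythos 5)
Your proposal is correct and is exactly the argument the paper intends (the corollary is stated without proof, immediately after the Proposition $\fA(R)=\fC(R)$ and Lyubeznik's finiteness theorem, precisely because it is their formal combination). Your added remark about the cohomological degree being forced to equal $\height(P)$ in the Cohen--Macaulay case is a sensible clarification but introduces nothing beyond what the preceding Proposition already establishes.
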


The concept of Frobenius depth can appear a little technical at a
first glance. Hartshorne and Speiser used it to give answers to an
important problem stated by Grothendieck: Let $A$ be a commutative
ring and let $I \subset A$ be an ideal. If $n$ is an integer, find
conditions under which $H^i_i(M) =0$ for all $i >n$ and all
$A$-modules $M$. For their answer this problem we refer the reader to~\cite{Ha}.

More recently Lyubeznik introduced the following variant of
Frobenius depth, Definition 4.1 in~\cite{L2} and proved that it
coincides with the earlier introduced concept of Hartshorne-Speiser
under mild conditions. 

\begin{Def}[Lyubeznik]
Let $\ringR$ be a local Noetherian ring of positive characteristic.
The $\Fdepth$ of $R$ is the smallest $i$ such that $F^s$ does not
send $H^i_{\fm}(R)$ to zero for any $s$. We will denote this number
by $\Fdepth_L(R)$.

\end{Def}

\begin{Thm}[Lyubeznik] Let $R$ be a local ring which is a homomorphic
image of a regular local ring. Then

$$\Fdepth_{HS} (R) = \Fdepth_L (R).$$

\end{Thm}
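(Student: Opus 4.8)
The plan is to show that both $\Fdepth_{HS}(R)$ and $\Fdepth_L(R)$ equal
$$\delta(R):=\Min\{\Fdepth(PR_P,R_P)+\coheight(P):P\in\Spec(R)\}.$$
For $\Fdepth_{HS}(R)$ this is precisely the Remark recorded after the definition of $\Fdepth_{HS}$. Since $\Fdepth(PR_P,R_P)=\min\{r:H^r_{PR_P}(\widehat{R_P})_s\neq0\}$, reindexing by $i=r+\coheight(P)$ gives $\delta(R)=\min\{i:H^{i-\coheight(P)}_{PR_P}(\widehat{R_P})_s\neq0\text{ for some }P\in\Spec(R)\}$. On the other side, for $i<\Fdepth_L(R)$ some power of $F$ annihilates $H^i_{\fm}(R)$, so $(H^i_{\fm}(R))_{red}=0$; conversely, if $(H^i_{\fm}(R))_{red}=0$ then some power of $F$ annihilates $H^i_{\fm}(R)$ by the Hartshorne--Speiser bound on the nilpotency index~\cite{Ha,L}. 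Hence $\Fdepth_L(R)=\min\{i:(H^i_{\fm}(R))_{red}\neq0\}$, and comparing the two formulas the theorem is reduced to the assertion that, for every $i$,
$$(H^i_{\fm}(R))_{red}\neq0\ \Longleftrightarrow\ H^{i-\coheight(P)}_{PR_P}(\widehat{R_P})_s\neq0\text{ for some }P\in\Spec(R).\qquad(\star)$$

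Before proving $(\star)$ I would reduce to the case that $R$ is complete with perfect residue field. Both sides of the theorem are unchanged when $R$ is replaced by $\widehat R$: for $\Fdepth_L$ because $H^i_{\fm}(R)\cong H^i_{\fm\widehat{R}}(\widehat{R})$ as modules with Frobenius action, and for $\delta(R)$ as part of the Hartshorne--Speiser--Lyubeznik theory (\cite{L,L2}). Passing further to the perfect closure of the residue field is a faithfully flat, Frobenius-compatible base change which, by Theorem~\ref{hsl}, detects neither the vanishing of $(H^i_{\fm})_{red}$ nor that of any stable part. After these reductions $R$ contains a coefficient field and, by Cohen's structure theorem, $R=S/I$ with $(S,\fn)$ a complete regular local ring, so that Lyubeznik's theory of $F$-modules applies.

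For ``$\Leftarrow$'' in $(\star)$, fix $P$ with $c=\coheight(P)$ and $H^{i-c}_{PR_P}(\widehat{R_P})_s\neq0$. A nonzero element of this stable part is not annihilated by any power of $F$ (Theorem~\ref{hsl}), so $H^{i-c}_{PR_P}(R_P)$ carries such a class, and pulling it back along the localization map $H^{i-c}_P(R)\to H^{i-c}_{PR_P}(R_P)$ produces a non-nilpotent class $\eta\in H^{i-c}_P(R)$. Choosing $y_1,\dots,y_c\in R$ whose images form a system of parameters for $R/P$, so that $(P,y_1,\dots,y_c)$ is $\fm$-primary, one propagates $\eta$ through the $c$ connecting homomorphisms of the long exact sequences comparing $H^\bullet_{\fa}$ with $H^\bullet_{(\fa,y_j)}$, all of which commute with $F$, to land in $H^i_{\fm}(R)$. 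The technical core is that non-nilpotence must survive each of these connecting maps; for this I would invoke a Frobenius-equivariant form of Grothendieck's non-vanishing theorem, equivalently the fact that in Lyubeznik's formalism $H^i_{\fm}(R)$ is the Matlis dual of an $F$-finite $F$-module whose co-support is detected prime by prime with exactly this shift by $\coheight$.

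For ``$\Rightarrow$'' in $(\star)$, assume $(H^i_{\fm}(R))_{red}\neq0$. If $(H^i_{\fm}(R))_s\neq0$ one takes $P=\fm$, so $c=0$. In general the stable part at $\fm$ can vanish while $(H^i_{\fm}(R))_{red}\neq0$ (already for $H^1_{\fm}$ of a one-dimensional regular local ring), and one must exhibit a witnessing prime of positive coheight; passing to the Matlis dual $N=H^i_{\fm}(R)^\vee$, the non-nilpotence of $F$ on $H^i_{\fm}(R)$ translates into $N$ not being a nilpotent object in Lyubeznik's sense, such an object has an associated prime $P$ that detects the non-nilpotence, and localizing, completing and shifting back by $\coheight(P)$ yields $H^{i-\coheight(P)}_{PR_P}(\widehat{R_P})_s\neq0$. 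The main obstacle is precisely this two-way passage through the $F$-module formalism: reconciling the elementary $p$-linear action of $F$ on $H^i_{\fm}(R)$ with the $F$-module structure on its Matlis dual, and controlling co-support under localization and completion. Granting Lyubeznik's apparatus, the rest is the reductions above together with the reindexing of $\delta(R)$.
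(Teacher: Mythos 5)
You should first be aware that the paper offers no proof of this statement at all: it is quoted as a theorem of Lyubeznik (the comparison of Definition 4.1 of~\cite{L2} with the Hartshorne--Speiser notion), so there is no in-paper argument to measure yours against. Judged on its own terms, your outline gets the correct reductions: identifying $\Fdepth_{HS}(R)$ with $\Min\{\Fdepth(PR_P,R_P)+\coheight(P)\}$ is exactly the Remark in the paper, and the identification $\Fdepth_L(R)=\min\{i:(H^i_{\fm}(R))_{red}\neq0\}$ via the uniform Hartshorne--Speiser--Lyubeznik bound on the nilpotency index is correct. Your one-dimensional regular example correctly shows that the witnessing prime can have positive coheight, so the theorem is not a statement about $\fm$ alone.

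However, the proposal has a genuine gap: the equivalence $(\star)$ \emph{is} the theorem, and both directions of it are asserted rather than proved. In the ``$\Leftarrow$'' direction, the claim that non-nilpotence survives each of the $c$ connecting homomorphisms from $H^{i-c}_P(R)$ up to $H^i_{\fm}(R)$ is false for a single connecting map in general (connecting maps can kill classes); the statement that it works here ``by a Frobenius-equivariant form of Grothendieck's non-vanishing theorem'' is precisely the content that Lyubeznik's $F$-module machinery supplies, and you give no argument for it. Likewise the ``$\Rightarrow$'' direction --- producing a prime $P$ from an associated prime of a non-nilpotent $F$-module structure on the Matlis dual, and showing that localization, completion, and the shift by $\coheight(P)$ interact correctly with the stable part --- is where all the work lies, and you explicitly defer it to ``Lyubeznik's apparatus.'' A further caution: $(\star)$ as you state it is an equivalence for \emph{every} $i$, which is stronger than the equality of the two minima that the theorem asserts; you should either prove the stronger statement or weaken $(\star)$ to a statement about the minimal such $i$ on each side. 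As it stands this is a correct roadmap to Lyubeznik's proof, not a proof.
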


Lyubeznik has also given interesting characterizations for the cases
$\Fdepth_L(R) \leq 1$. Singh and Walther have added to these results 
by proving the following interesting theorem. We
will show later that one cannot replace the hypothesis $k$
algebraically closed and hope to obtain the same result.

\begin{Thm}[Singh-Walther]
\label{sw}
Let $\ringR$ be a complete local ring of positive
characteristic. Assume that the residue field $k$ is algebraically
closed. Then the number of connected components of the  punctered
spectrum $\Spec(R) \setminus \{\fm\}$ is
$$1 + \dim_k (H^1_{\fm}(R))_s.$$

\end{Thm}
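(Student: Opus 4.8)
The plan is to reduce the statement to a combination of two facts: first, a purely topological/algebraic identification of the number of connected components of the punctured spectrum with $\dim_k H^0_{\fm}(R/\text{something})$ or with a Mayer–Vietoris / idempotent computation in $H^0$ and $H^1$; and second, the Hartshorne–Speiser–Lyubeznik machinery from Section 2, which controls the stable part $M_s$ of an Artinian module with Frobenius action. Concretely, recall the standard long exact sequence relating local cohomology of $R$ and of the punctured spectrum $U = \Spec(R)\setminus\{\fm\}$: if $R$ has dimension $\geq 2$ and (after reduction) we may assume $R$ has depth conditions making $H^0_\fm(R) = H^1_\fm(R) = 0$ in the relevant cases, then $H^0(U,\mathcal O_U)/R \cong H^1_\fm(R)$, and the number of connected components $c$ of $U$ equals $\dim_k$ of the idempotents, i.e. there is an identification of the "new" idempotents of $\widehat{H^0(U,\mathcal O_U)}$ with a $c-1$ dimensional object sitting inside $H^1_\fm(R)$. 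The first step, then, is to set up this identification carefully, using that $R$ is complete with algebraically closed residue field so that $H^0(U,\mathcal O_U)$, being a finite product of complete local rings, has its idempotents and residue-field data under control.

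Next I would bring in the Frobenius action. The key point is that the decomposition of $U$ into connected components is Frobenius-stable in the strongest possible sense: the idempotents of $H^0(U,\mathcal O_U)$ are fixed by $F$ (since $e^2 = e$ forces $F(e) = e^p = e$), so the corresponding $(c-1)$-dimensional subspace $V \subseteq H^1_\fm(R)$ on which we have landed is one on which $F$ acts bijectively — in fact, after tensoring up to the perfect closure $K$ of $k$, $F$ acts as a bijective $p$-linear map, and an elementary argument (the image of an $e$-dimensional $K$-space under a bijective $p$-linear map spans the whole space) shows $V^K \subseteq (H^1_\fm(R)^K)_s$, hence $V \subseteq (H^1_\fm(R))_s$ after using $\dim_k M_s = \dim_K (M^K)_s$ from Theorem~\ref{hsl}. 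This gives the inequality $\dim_k (H^1_\fm(R))_s \geq c - 1$, i.e. the number of connected components is at most $1 + \dim_k(H^1_\fm(R))_s$.

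For the reverse inequality I would argue that the stable part $(H^1_\fm(R))_s$ cannot be any larger than the "idempotent part". By Theorem~\ref{hsl}, $N := (H^1_\fm(R))_s$ is a finite-dimensional $k$-space on which $F$ acts bijectively as a $p$-linear map; since $k$ is algebraically closed (and, replacing $k$ by its perfect closure, perfect), such a space has a $k$-basis of $F$-fixed vectors — this is the standard Lang-type fact that a bijective $p$-linear operator over a perfect field of char $p$ is "diagonalizable with eigenvalue $1$" after extending, but algebraic closure lets us do it over $k$ itself. Now I would show that each $F$-fixed element of $H^1_\fm(R)$ arises from an idempotent: pulling back along $H^0(U,\mathcal O_U)/R \cong H^1_\fm(R)$, an $F$-fixed class lifts to an element $a \in H^0(U,\mathcal O_U)$ with $a^p - a \in R$; completeness and the structure of $H^0(U,\mathcal O_U)$ as a product of complete local rings with algebraically closed residue field let one solve $a^p - a = \lambda$ and adjust by $R$ to land on a genuine idempotent, producing a connected-component count. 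Assembling the two inequalities yields $c = 1 + \dim_k (H^1_\fm(R))_s$.

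The main obstacle I expect is the reverse inequality — specifically, the passage from an $F$-fixed cohomology class in $H^1_\fm(R)$ back to an idempotent of $H^0(U,\mathcal O_U)$. One has to handle the cases where $\depth R$ is $0$ or $1$ (so that $H^0_\fm(R)$ or the cokernel term does not simply vanish) by first replacing $R$ with a suitable quotient or by tracking the extra terms in the exact sequence $0 \to H^0_\fm(R) \to R \to H^0(U,\mathcal O_U) \to H^1_\fm(R) \to 0$; and the Artin–Schreier-style solving of $a^p - a = \lambda$ uses both completeness and the algebraic closedness of $k$ in an essential way (this is exactly where the hypothesis cannot be dropped, matching the remark preceding the theorem). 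Everything else is bookkeeping: the forward inequality is soft, and the linear-algebra input on bijective $p$-linear maps is supplied by Theorem~\ref{hsl} together with the perfect-closure invariance of $\dim_k M_s$.
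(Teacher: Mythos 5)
The paper itself gives no proof of this statement --- it is imported verbatim from Singh--Walther \cite{SW} as input for Section 4 --- so there is no internal argument to compare yours against; I can only judge the proposal on its merits. The soft half of your outline is essentially right: the four-term sequence $0 \to H^0_\fm(R) \to R \to \Gamma(U,\mathcal O_U) \to H^1_\fm(R) \to 0$ requires no depth hypotheses, the idempotents $e_1,\dots,e_c$ of $\Gamma(U,\mathcal O_U)$ are fixed by $F$, and their images span a subspace $V$ with $\langle F(V)\rangle_k = V$, hence $V \subseteq \bigcap_j \langle F^j(M)\rangle_k = M_s$. Even here a small argument is owed: to see that $\dim_k V = c-1$ you must rule out a relation $\sum \lambda_i e_i \in \operatorname{im}(R)$ with the $\lambda_i$ not all equal, which one does by manufacturing from such an $r$ a nontrivial idempotent of $R/H^0_\fm(R)$ and lifting it through the nilpotent ideal $H^0_\fm(R)$ to contradict locality of $R$. (Also, $k$ is already algebraically closed, so the detour through the perfect closure is unnecessary.)

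The genuine gap is the reverse inequality, and it sits exactly where you place the load-bearing step. Reducing to $F$-fixed classes via a basis of fixed vectors for the bijective $p$-linear action on $M_s$ is fine, but the claim that one can then ``solve $a^p - a = \lambda$'' using ``the structure of $H^0(U,\mathcal O_U)$ as a product of complete local rings with algebraically closed residue field'' rests on a false description of that ring: already for $R = k[[t]]$ one has $\Gamma(U,\mathcal O_U) = k((t))$, whose residue field is $k((t))$ and not $k$; in dimension one it is a product of Artinian local rings with residue fields the function fields $\kappa(P_i)$, and in dimension $\ge 2$ it is typically not a product of local rings and need not be Noetherian. So the Artin--Schreier step has nothing to act on, and the entire substance of the theorem --- that every $F$-fixed class of $H^1_\fm(R)$ lies in the span of the idempotents, equivalently that $(H^1_\fm(R))_s = 0$ when $U$ is connected --- is left unproved. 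The known route is quite different: reduce to $R$ reduced, induct over the minimal primes via Mayer--Vietoris (the components of $U$ being the components of the graph on $\Min(R)$ joining $P_i, P_j$ when $P_i+P_j$ is not $\fm$-primary, with each ``overlap'' contributing a copy of $k$ to the stable part through $H^0_\fm(R/(P_i+P_j))$), and in the domain case compare $R$ with its normalization $V$, where one shows $F$ is nilpotent on the finite-length module $V/R$ and $(H^1_\fm(V))_s = 0$. Algebraic closedness of $k$ is used precisely to force the residue field of $V$ to equal $k$ --- the ring $k + xL[[x]]$ of Proposition~\ref{example} is exactly the counterexample when this fails --- so your instinct about where the hypothesis matters is correct, but the proposal supplies no mechanism that actually exploits it.
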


\section{Examples}

The reader can note that the F-injectivity assumption is crucial in
our treatment of F-stability in Section~2. At times one needs to
enlarge the residue field of a ring to its perfect closure and this
brings up a natural question which is of interest in own right: is
F-injectivity preserved under those circumstances?

To be more precise, let us formulate the following question.

\begin{Q}
\label{behavior} {\rm Let $\ringR$ be a local Noetherian F-injective
ring of positive characteristic $p$. Assume that $k$ is a
coefficient field of $R$, that is the composition of the maps  $k
\into R \to R/\fm=k$ is the identity map. Let $k \to k'$ be a purely
inseparable extension. Is the ring $k' \otimes_k R$ F-injective?
Is $k' \ \widehat{\otimes}_k \ R$ F-injective?}
\end{Q}

Let $K$ be the perfect closure of $k$, that is $K = k^{1/p^\infty}$. The ring $S= K \otimes_k R$ is local 
with maximal ideal $\fm \otimes_k K$. By Dumitrescu (Theorem 4.8~\cite{D}), the ring 
$S$ is Noetherian (one can take the finite purely inseparable extension refered to in Theorem 4.8~\cite{D} to equal the 
trivial extension $k \subseteq k$, because the residue field of $R$ is $k$). Since $K \ \widehat{\otimes}_k \ R$ is the
completion of $S$ at the maximal ideal it is Noetherian as well. Also by flat base change we see that
$k' \otimes_k R \subset K \otimes_k R$ is a flat local extension and hence the ring $k' \otimes_k R$ is Noetherian as well.

Since F-injectivity commutes with completion (see for example Lemma 2.7 in~\cite{EH}) we will examine
whether $S$ is F-injective when $R$ is.

We will present an example that provides a negative answer to the
question. The question is however still open if we further assume
that $R$ is normal.

Let $k \subset L$ be an algebraic field extension and $x$ an
indeterminate. Consider $R =k+xL[[x]]$ which is a local Noetherian
with maximal ideal $\fm=xL[[x]]$ and residue field equal to $k$.
This ring is one dimensional complete domain. This example was considered in
~\cite{EH}, Example 2.16.

In~\cite{EH} it was proven that $R$ is F-injective if and only if
$$L^p \cap k = k^p.$$

Our claim is that for a suitable extension $k \subset L$, the ring $k^{1/p} \otimes_k R$ is not
F-injective. In fact, it is not even reduced.

Now note that $k^{1/p} \subset K$ and hence $k^{1/p} \otimes_k R \subset K \otimes_k R$ and therefore $K \otimes_k R$
cannot be reduced when $k^{1/p} \otimes_k R$ is not reduced.
\begin{Prop}
\label{example}
Let $k \subset L$ be a finite algebraic extension
such that $L^p \cap k =k^p$ and $k \subset L $
is not separable.

Let $x$ be an indeterminate and consider $R = k+xL[[x]]$.

\begin{enumerate}
\item
R is F-injective local complete F-stable 1-dimensional ring.
\item
$ R \otimes_k k^{1/p}$ is not reduced, hence not
F-injective.
\end{enumerate}

\end{Prop}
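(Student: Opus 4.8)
The plan is to verify the two assertions separately, using the structural description of $R = k + xL[[x]]$ and its behavior under the base change $-\otimes_k k^{1/p}$.

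For part (1), I would first record the basic ring-theoretic facts: $R$ is a one-dimensional complete Noetherian local domain with maximal ideal $\fm = xL[[x]]$ and residue field $k$, as noted in the excerpt (this is Example 2.16 of~\cite{EH}). F-injectivity is immediate from the quoted criterion of~\cite{EH}: since we have assumed $L^p \cap k = k^p$, the ring $R$ is F-injective. It remains to show $R$ is F-stable. Since $R$ is one-dimensional Cohen-Macaulay (a domain), the only nonzero local cohomology is $H^1_{\fm}(R)$, and by Theorem~\ref{main} it suffices to exhibit a nonzero element of $(H^1_{\fm}(R))_s$, equivalently (by Remark~\ref{charac}) a nonzero $\eta \in H^1_{\fm}(R)$ with $\fm = \Fann(\eta)$. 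Take $x$ itself as a parameter; computing $H^1_{\fm}(R)$ as $\varinjlim R/(x^t)$ with multiplication-by-$x$ transition maps, I would pick $\eta$ to be the image of a suitable element $u \in R$ with $\bar u$ in the socle of $R/(x)$, and check that $F^e(\eta) \neq 0$ for all $e$ (which follows from F-injectivity) and that $\fm$ kills $F^e(\eta)$ for all $e$; this gives $\fm = \Fann(\eta)$, hence F-stability. The one-dimensional structure makes this computation essentially a statement about the monoid of values of $x$-adic orders, and the socle of $R/(x)$ being $L/k$ (or the appropriate quotient) should make the verification transparent.

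For part (2), the heart of the matter is a purely algebraic non-reducedness computation. I would analyze $R \otimes_k k^{1/p}$ concretely: an element of $R$ is $a_0 + \sum_{i \ge 1} a_i x^i$ with $a_0 \in k$ and $a_i \in L$, and tensoring with $k^{1/p}$ over $k$ adjoins $p$-th roots of elements of $k$ while leaving the $L$-coefficients as they are. Since $k \subset L$ is \emph{not separable}, there exists $\alpha \in L$ with $\alpha^p \in k$ but $\alpha \notin k$; moreover the hypothesis $L^p \cap k = k^p$ forces $\alpha^p \in k \setminus k^p$ (if $\alpha^p$ were in $k^p$, say $\alpha^p = \beta^p$ with $\beta \in k$, then $\alpha = \beta \in k$, a contradiction). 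Set $\lambda = \alpha^p \in k \setminus k^p$, so $\lambda^{1/p} \in k^{1/p} \setminus k$ and $\lambda^{1/p} \neq \alpha$. Then consider the element $z = \alpha x - \lambda^{1/p} x \in R \otimes_k k^{1/p}$ (here $\alpha x \in \fm \subseteq R$ and $\lambda^{1/p} \otimes x$ lies in the tensor product). I expect $z \neq 0$ because $\alpha \neq \lambda^{1/p}$ as elements of the $k^{1/p}$-module $L \otimes_k k^{1/p}$ — this requires checking that $\alpha$ and $\lambda^{1/p}$ remain distinct after the faithfully flat base change, which is where the condition $L^p \cap k = k^p$ re-enters. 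On the other hand $z^p = \alpha^p x^p - \lambda x^p = \lambda x^p - \lambda x^p = 0$, so $z$ is a nonzero nilpotent. Hence $R \otimes_k k^{1/p}$ is not reduced, and since F-injective rings are reduced (an F-injective ring is reduced, as $H^0_{\fm}$ detects nilpotents, or because F-pure $\Rightarrow$ reduced and more directly F-injective rings are reduced), it is not F-injective.

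The main obstacle is the nonvanishing claim $z \neq 0$ in part (2): one must be careful that $\alpha x$ and $\lambda^{1/p} \otimes x$ are genuinely different elements of $R \otimes_k k^{1/p}$ and that no collapse occurs. The clean way to see this is to use that $R \to R\otimes_k k^{1/p}$ is faithfully flat (being a base change along the faithfully flat $k \to k^{1/p}$), so $R$ injects into the tensor product; then $z = (\alpha - \lambda^{1/p})\cdot x$ where $\alpha - \lambda^{1/p}$ is a nonzero element of the free $k^{1/p}$-module on a $k$-basis of $L$ extended suitably, and multiplication by the nonzerodivisor $x$ is injective in the domain-like structure, so $z \neq 0$. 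I would also double-check that $\lambda = \alpha^p$ indeed lies in $k \setminus k^p$ using $L^p \cap k = k^p$, since this is precisely what prevents $\lambda^{1/p}$ from already lying in $k$ and thus what makes $z$ nonzero. The rest is routine bookkeeping with power series.
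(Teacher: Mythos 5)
Your part (1) is essentially the paper's argument: the paper takes $u=ax$ with $a\in L\setminus k$ (a nonzero socle representative in $R/(x)$), checks $\fm\cdot(ax)^q\subseteq x^qR$ by an order count on power series, and rules out $I(u)=R$ via $u\notin I^F=I$; your sketch is the same computation, only less explicit about the choice of $u$ and about why $\eta\neq 0$.

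Part (2) has a fatal gap. You start from ``there exists $\alpha\in L$ with $\alpha^p\in k$ but $\alpha\notin k$,'' claiming this follows from inseparability. No such $\alpha$ exists under the hypotheses: if $\alpha\in L$ and $\alpha^p\in k$, then $\alpha^p\in L^p\cap k=k^p$, so $\alpha^p=\beta^p$ for some $\beta\in k$, and uniqueness of $p$-th roots in characteristic $p$ gives $\alpha=\beta\in k$. Your own parenthetical remark proves exactly the implication $\alpha^p\in k^p\Rightarrow\alpha\in k$, but you then conclude the opposite, namely $\alpha^p\in k\setminus k^p$, which contradicts $L^p\cap k=k^p$ head-on. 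The set $\{\alpha\in L:\alpha^p\in k\}$ is $L\cap k^{1/p}$, and the hypothesis $L^p\cap k=k^p$ says precisely that this field equals $k$; that hypothesis is forced on you because it is what makes $R$ F-injective in part (1). So the inseparability must be detected by a linear relation rather than by a single element, and this is what the paper does: by MacLane's criterion there are $b_1,\dots,b_h\in L$ linearly independent over $k$ and $a_1,\dots,a_h\in k$, not all zero, with $\sum_i a_ib_i^p=0$; then $u=\sum_i a_i^{1/p}\otimes b_i$ is a nonzero element of $k^{1/p}\otimes_k L$ with $u^p=1\otimes\sum_i a_ib_i^p=0$, and $v=\sum_i a_i^{1/p}\otimes(b_ix)$ is the corresponding nonzero nilpotent of $k^{1/p}\otimes_k R$, using the $k$-vector space inclusion $L\cdot x\subset R$ to see $v\neq 0$. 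Your single-element construction cannot be repaired without dropping one of the two field-theoretic hypotheses, which would destroy part (1).
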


\begin{proof}
(1) It is immediate that $R$ is local, one-dimensional and complete.
For a proof of the F-injectivity our $R$ we refer to Example 2.16
in~\cite{EH}.

Note that $x$ is a parameter for $R$. Let $I=xR$. We will show that
there exist $u \in R$ such that $\fm = I(u) = \{ c \in R : cu^q \in
I\brq \ {\rm for \ all \ } q \}$. If this holds then, according to
Remark~\ref{alt}, $\fm \in \fB(R)$. But this is equivalent to the
F-stability of $R$, since $R$ is Cohen-Macaulay, by Remark~\ref{charac}
or Theorem~\ref{complete}.

Let $a \in L \setminus k$. Then $ax \in \fm= xL[[x]]$, but $ax$ is not
in $I$.

Let $c \in \fm$ arbitrary. Then $c (ax)^q $ is a formal power series
of order at least $q+1$ which means that it belongs to $I\brq = x^q
R$. So if we let $u =ax$ then  $\fm \subseteq I(u)$. But $I(u) = R$
if and only if $ u \in I^F =I$ which is not the case, as $a \in L
\setminus k$.

(2) Since $k \subseteq L$ is not separable, then $k^{1/p} \otimes_k L$ is not reduced.

Let $u = \sum_{i=1}^h a_i^{1/p} \otimes_k b_i$, where $a_i \in k, b_i \in L$, $ i=1,..., h$ such that
$u^p =0$ but $u \neq 0$. Note  that $u^p = \sum_{i=1}^h a_i \otimes_k b_i^p = 1 \otimes \sum_{i=1}^h a_ib_i^p$, which is
equivalent to $\sum_{i=1}^h a_ib_i^p =0$ in $L$.

Consider $v = \sum_{i=1}^h a_i^{1/p} \otimes_k (b_i \cdot x)$ as an element of $k^{1/p} \otimes_k R$. Now, $v^p =
1 \otimes [(\sum_{i=1}^h a_ib_i^p)x^p] = 0.$ Let us argue that $v \neq 0$. Regard $L\cdot x$ as a $k$-vector space 
and note that $k^{1/p} \otimes_k L \simeq k^{1/p} \otimes_k L\cdot x$ as $k$-vector spaces. Moreover,
$L \cdot x \subset R$ as $k$-vector spaces and hence  $k^{1/p} \otimes_k L \cdot x \subset k^{1/p} \otimes_k R$. Under the isomorphism
the nonzero element $u \in k^{1/p}\otimes_k L$ corresponds to $v \in k^{1/p} \otimes_k L \cdot x \subset k^{1/p} \otimes_k R$.  In conclusion,
$v$ is nonzero in $k^{1/p} \otimes_k R$ as well.

\end{proof}

We recall the Example 2.14 in~\cite{EH} which exhibits a finite extension $k \subset L$ such that $L^p \cap k =k^p$ 
and also that $k \subset L $ is not separable. Let $E$ be an infinite perfect field of characteristic $p$ and set $k=E(u,v)$. 
Let $L=k[y]/(y^2p+y^p u -v)$. In~\cite{EH} it was proven that $L^p \cap k =k^p$  and $[L: k(L^p)] \geq p >1.$
It is known that a finite extension $k \subset L$ is separable if and only if $k(L^p) =L$.

Using the notations in the preceding Proposition, it is interesting
to note that $R \to k^{1/p} \otimes_k R$ is a flat local map with closed fiber
equal to $k^{1/p}$, a field. In this example the F-injectivity
of $R$ does not pass to $k^{1/p} \otimes_k R$. Much of this has to do with the
fact that $k^{1/p}$ is not separable over $k$, where $k$ is
the residue field of $R$. Indeed, whenever the residue field extension is separable  F-injectivity is preserved under flat local maps.
The following result is essentially contained in~\cite{AE}, Theorem 4.2, although not stated for F-injective rings. 
We present the argument here in our context.

Let $R$ be a $k$-algebra where $k$ is a field. We say that $R$ is {\it geometrically F-injective} over $k$, if, for every finite field extension
$k \subseteq k'$ the ring $k' \otimes_k R$ is F-injective. Note that a field extension $L$ of $k$ is geometrically F-injective if and 
only if $k \subset L$ is separable.

\begin{Thm}[Aberbach-Enescu]
Let $(R, \fm, k) \to (S, \fn, L)$ be a flat local map such that $S/\fm S$ is Cohen-Macaulay and geometrically F-injective over $k$. If $R$ is 
F-injective and Cohen-Macaulay, then $S$ is F-injective and Cohen-Macaulay.
\end{Thm}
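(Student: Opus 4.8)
The plan is to reduce the statement to the case where the closed fiber $S/\fm S$ is Artinian, then handle that base case directly, and finally use induction on $\dim(S/\fm S)$ to get the general statement. First I would observe that, since $S/\fm S$ is Cohen-Macaulay, it admits a system of parameters $z_1, \ldots, z_n$; and since $S \to S/\fm S$ is surjective, lift these to elements $z_1, \ldots, z_n$ of $\fn$ whose images form such a system of parameters. Because $S/\fm S$ is Cohen-Macaulay, $z_1, \ldots, z_n$ is a regular sequence on it, and by flatness of $R \to S$ the sequence $z_1, \ldots, z_n$ is $S$-regular and remains regular modulo $\fm S$; so $S/(z_1, \ldots, z_n)S$ is flat over $R$ with closed fiber $S/(\fm, z_1, \ldots, z_n)S$, which is Artinian local with residue field $L$ (a quotient of the Artinian ring $(S/\fm S)/(z_1,\ldots,z_n)$).

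The reduction to the Artinian-fiber case should go as follows. Since F-injectivity commutes with completion (quoted in the excerpt), we may assume $R$ and $S$ are complete. Then I would use the standard device for Cohen-Macaulay rings: a regular sequence $z_1,\ldots,z_n$ in $\fn$ mapping to a system of parameters of the Artinian-local-modulo-$\fm S$ reduction, and argue that F-injectivity of $S$ is equivalent to F-injectivity of $S/(z_1,\ldots,z_n)S$ together with the regular-sequence-Frobenius-closure argument already invoked in the excerpt (Theorem 4.5 of \cite{HH94} with $c=1$): namely, a Cohen-Macaulay local ring is F-injective if and only if some parameter ideal is Frobenius closed, and killing a regular sequence preserves and reflects Frobenius-closedness of the remaining parameter ideal. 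Geometric F-injectivity of the closed fiber is also inherited: $(S/\fm S)/(z_1,\ldots,z_n)$ is geometrically F-injective over $k$ because geometric F-injectivity passes to quotients by a regular sequence (same regular-sequence argument, now applied over every finite extension $k'$), and the final Artinian local quotient, being geometrically F-injective and Artinian, forces the residue field extension $k \subseteq L$ to be separable.

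For the base case, assume $S/\fm S$ is Artinian local with residue field $L$ separable over $k$. I would first reduce to $S/\fm S$ a field: a separable residue extension plus Artinian means $S/\fm S$ is a finite product... no, it is local, so instead I factor $R \to S$ through $R \to R' := $ the subring generated by a coefficient field lifting a copy of $L$ when possible, or more simply use that separability of $k\subseteq L$ lets one enlarge $k$ to $L$ within $S/\fm S$ freely. The cleanest route is: reduce to $S/\fm S = L$ a separable field extension of $k$ by killing a system of parameters of the Artinian ring as above, then observe $L \otimes_k R$ is F-injective because $k \subseteq L$ separable implies $L$ is a filtered colimit of smooth (étale) $k$-algebras, so $L \otimes_k R$ is a colimit of smooth $R$-algebras, Frobenius stays flat, and F-injectivity of $R$ transfers; and $S$ is flat over $L\otimes_k R$ with closed fiber $L$, i.e.\ trivial closed fiber, so one concludes via the fact (used repeatedly in the excerpt) that F-injectivity ascends along flat local maps with field closed fiber that is trivial. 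The main obstacle will be the base case where the closed fiber is a field: I must genuinely use separability — via the structure of separable extensions as colimits of étale algebras, equivalently that $L$ is geometrically reduced and $L \otimes_k L$ is reduced — to show Frobenius remains injective on each $H^i_{\fn}(S)$, since this is precisely the place where the counterexample of Proposition~\ref{example} shows the inseparable case fails. Everything else is the routine regular-sequence bookkeeping already rehearsed in the excerpt.
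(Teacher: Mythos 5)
Your base case (closed fiber equal to a separable field extension of $k$, handled via coefficient fields and writing $L$ as a filtered colimit of finite \'etale $k$-algebras) is essentially sound, but the reduction to it has a fatal gap: you assert that ``geometric F-injectivity passes to quotients by a regular sequence,'' and this is false. F-injectivity does not survive killing parameters. For example $T=k[[x,y]]/(xy)$ is Cohen--Macaulay and geometrically F-injective over $k$ (it is even F-pure, by Fedder's criterion, after any finite extension of $k$), yet for \emph{any} system of parameters $z$ of $T$ the quotient $T/(z)$ is an Artinian local ring of length $\e(z;T)\ge \e(T)=2$, hence is not a field, hence not reduced, hence not F-injective --- for an Artinian local ring, injectivity of Frobenius on $H^0_{\fm}=T$ is exactly reducedness. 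In general $(S/\fm S)/(z_1,\dots,z_n)$ is a field (which is what your base case needs, since a geometrically F-injective Artinian local $k$-algebra is forced to be a separable field extension of $k$) precisely when $S/\fm S$ is regular and the $z_i$ generate its maximal ideal. So your induction reaches its base case only when the closed fiber is regular, and the theorem in the stated generality (e.g.\ fiber $k[[x,y]]/(xy)$) is not proved. A secondary inaccuracy: the claimed equivalence ``$S$ is F-injective iff $S/(z_1,\dots,z_n)S$ is'' holds only in the direction you actually use (Fedder's deformation theorem: for $S$ Cohen--Macaulay and $z$ an $S$-regular sequence, F-injectivity of $S/(z)S$ implies that of $S$); the converse is false, as the same example shows, so ``preserves and reflects'' needs to be replaced by a citation of the deformation result.

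The paper does not argue by d\'evissage at all: it invokes the proof of Theorem 4.2(1) of Aberbach--Enescu verbatim, replacing tight closure by Frobenius closure, taking $c=1$, and taking $I$ to be a parameter ideal of $R$. That argument is a direct computation: one lifts a system of parameters $z_1,\dots,z_n$ of the closed fiber, and shows that the parameter ideal $(I,z_1,\dots,z_n)S$ of $S$ is Frobenius closed by analyzing colon ideals $\bigl((I,z)^{[q]}S :_S u^q\bigr)$ via flatness of $R\to S/(z_1^q,\dots,z_n^q)S$; the contribution from $R$ is controlled by F-injectivity of $R$, and the contribution from the fiber is controlled by geometric F-injectivity of the \emph{full} fiber ring $S/\fm S$ (the finite extensions $k'$ of $k$ enter through residue fields arising in that computation). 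No Artinian quotient of the fiber is ever required to be F-injective. To salvage your outline you would have to abandon the induction on $\dim(S/\fm S)$ and carry out this direct colon-ideal argument instead.
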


\begin{proof}
The proof follows as in Theorem 4.2 (1) in~\cite{AE} with minor changes: replace tight closure by Frobenius closure, use $c=1$ and let $I$
equal an ideal generated by a system of parameters. 
\end{proof}

We would like to end by showing that Theorem~\ref{sw} cannot be
extended to rings with non-algebraically closed residue field. Also a result on one-dimensional
F-injective rings is provided as well.

\begin{Rem}
{\rm Let $k \subseteq L$ be a field extension that satisfies the
conditions of Prop~\ref{example}. Then $R=k + xL[[x]]$ is a domain
and hence the punctured spectrum is connected, while $\dim_k
H^1_{\fm}(R)_s \neq 0$. The latter claim follows since $R$ is
F-injective and F-stable hence by Theorem~\ref{main} $H^1_{\fm}(R)_s
\neq 0$.}

\end{Rem}

\begin{Prop}
Let $\ringR$ be a local complete F-injective ring of dimension $1$.
Assume that $k$ is algebraically closed and $R$ is domain. Then $R$
is regular.
\end{Prop}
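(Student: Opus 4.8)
The plan is to combine the structure theory of complete one-dimensional domains with the classification result already in hand (Proposition~\ref{sw} and, more concretely, Theorem~\ref{sw}). Since $R$ is a complete local domain of dimension $1$ with algebraically closed residue field $k$, its normalization $\overline{R}$ is a complete DVR with the same fraction field, and $\overline{R}$ is module-finite over $R$; because $k$ is algebraically closed (hence perfect), Cohen's structure theorem gives $\overline{R}\cong k[[t]]$. Thus $R$ is a $k$-subalgebra of $k[[t]]$ with the same fraction field, and the conductor $\mathfrak{c} = (R :_R \overline{R})$ is a nonzero ideal of both $R$ and $\overline{R}$. The goal is to force $\mathfrak{c} = R$, i.e. $R = \overline{R}$, which is regular.

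First I would compute $\dim_k H^1_{\fm}(R)_s$ and relate it to the singularity of $R$. Since $R$ is F-injective and F-stable is not assumed here, I instead argue directly: from the short exact sequence $0 \to R \to \overline{R} \to \overline{R}/R \to 0$ one gets $H^1_{\fm}(R) \cong H^1_{\fm}(\overline{R})$ (as $\overline{R}/R$ has finite length, hence $H^1_{\fm}(\overline{R}/R) = 0$ and $H^0_{\fm}(\overline{R}) = 0$), and these isomorphisms are Frobenius-compatible since the maps are ring maps. Now $H^1_{\fm}(\overline{R}) = H^1_{(t)}(k[[t]])$ is the standard module $k[[t]][t^{-1}]/k[[t]]$, on which Frobenius acts by $[t^{-j}] \mapsto [t^{-pj}]$; an easy check shows its stable part is one-dimensional over $k$, spanned by $[t^{-1}]$. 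Hence $\dim_k H^1_{\fm}(R)_s = 1$ regardless of whether $R$ is regular — so the stable part alone cannot detect regularity, and Theorem~\ref{sw} only tells us the punctured spectrum is connected, which we already knew. The real input must come from F-injectivity of $R$ itself constraining the conductor.

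The key step, and the main obstacle, is therefore to show that F-injectivity of $R$ (equivalently, every parameter ideal of $R$ is Frobenius closed, since $R$ is Cohen--Macaulay) forces $R$ to be normal. Let $t \in \overline{R} = k[[t]]$ be a uniformizer and suppose $R \neq \overline{R}$; let $n$ be the smallest positive integer with $t^n \in R$ but (by minimality over a suitable choice) some lower power missing. Pick a parameter $x \in \fm$ of minimal order, say $\ord_t(x) = m$; then $x\overline{R} = t^m\overline{R}$. If $R$ is not regular, there is an element $y \in \overline{R}$ with $\ord_t(y) = \ell$ where $\ell$ is not in the numerical semigroup generated by the orders of elements of $R$ below $m$; the element $y^m \cdot x^{-\ell}$ lies in the fraction field and, after clearing, produces an element $z = $ (an appropriate monomial-type combination) witnessing $z^p \in (x^p)$ in $\overline{R}$, hence $z^p \in x^pR$ once we arrange $z^p \in R$, while $z \notin xR$ because $z$ has $t$-order less than $m$ yet is not an $R$-multiple of $x$. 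The care needed is in choosing $z$ so that $z \in R$, $z^p \in x^pR = (xR)^{[p]}$, but $z \notin xR$, contradicting Frobenius closedness of the parameter ideal $xR$. Concretely, since $k$ is algebraically closed we have $k^p = k$, so every element of $\overline{R} = k[[t]]$ is a $p$th power up to adjusting the valuation; the point is that the "gaps" of the semigroup of $R$ inside $\mathbb{N}$ give rise to power series $z$ with $z \notin R \cdot x$ but $z^p$ landing in $x^pR$ because the $p$-fold stretching of a gap can jump over the conductor. Making this precise — tracking exactly which semigroup gap produces the Frobenius-closure violation — is where the work lies; I expect a short induction on the length of $\overline{R}/R$, peeling off one semigroup generator at a time, will close it, the base case being $R = \overline{R} = k[[t]]$, which is regular.
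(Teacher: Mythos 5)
There is a genuine gap, and it originates in a computational error early on. You claim that the stable part of $H^1_{(t)}(k[[t]]) = k[[t]][t^{-1}]/k[[t]]$ is one-dimensional, spanned by $[t^{-1}]$. This is false: Frobenius sends $[t^{-j}]$ to $[t^{-pj}]$, so $\langle F^e(M)\rangle_k$ is the $k$-span of $\{[t^{-p^e j}] : j\geq 1\}$ (here one uses $k=k^p$), and a nonzero element of $M$ has bounded support, so it cannot lie in $\langle F^e(M)\rangle_k$ for all $e$. Hence $M_s = 0$, consistent with Theorem~\ref{sw} applied to the domain $k[[t]]$. Your miscomputation led you to conclude that the stable part ``cannot detect regularity'' and that Theorem~\ref{sw} gives no information, which is exactly backwards: the theorem converts the connectedness of the punctured spectrum (automatic for a domain) into the vanishing $H^1_{\fm}(R)_s = 0$, and that vanishing is the entire point. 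The paper's proof is precisely this: $R$ domain $\Rightarrow$ punctured spectrum connected $\Rightarrow$ $H^1_{\fm}(R)_s = 0$ by Theorem~\ref{sw} $\Rightarrow$ $R$ is not F-stable by Theorem~\ref{main} $\Rightarrow$ $R$ is F-rational (an F-injective Cohen--Macaulay ring that is F-unstable is F-rational, after Fedder--Watanabe) $\Rightarrow$ $R$ is regular since $\dim R = 1$.

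Having discarded the short route, you fall back on a direct argument that F-injectivity forces $R = \overline{R}$ via the conductor and the value semigroup of $R$ inside $k[[t]]$. That strategy is plausible in spirit (it works by hand for, say, $k[[t^2,t^3]]$, where $t^3 \in (t^2R)^F \setminus t^2R$), but as written it is a sketch, not a proof: the construction of the element $z$ with $z \in R$, $z^p \in (xR)^{[p]}$, $z \notin xR$ is never carried out, and you explicitly defer the main step (``making this precise \ldots\ is where the work lies''). Since the one step you do complete rests on the incorrect stable-part computation, the proposal does not establish the proposition.
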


\begin{proof}

The punctured spectrum $\Spec(R) \setminus \{ \fm \}$ is connected
since $R$ is domain. According to Theorem~\ref{sw}, we have that
$H^1_{\fm}(R)_s = 0$. But Theorem~\ref{main} implies that $R$ is not
F-stable. This implies that $R$ is F-rational, hence regular as
$\dim(R) =1$.

\end{proof}

{\bf Acknowledgment:} The author thanks Mel Hochster for discussions on local cohomology and suggesting considering
Example 2.16 in relation to the question in Section 4. The author also thanks the anonymous referee for his/her helpful
comments.

\end{document}